\documentclass{amsart}
\usepackage{amssymb}
\usepackage{lipsum}

\theoremstyle{plain}
\newtheorem{theorem}{Theorem}[section]
\newtheorem{proposition}{Proposition}[section]
\newtheorem{lemma}{Lemma}[section]
\newtheorem{corollary}{Corollary}[section]
\theoremstyle{definition}
\newtheorem{example}{Example}[section]
\newtheorem{definition}{Definition}[section]
\theoremstyle{remark}

\numberwithin{equation}{section}

\usepackage{tikz,xcolor,hyperref}

\definecolor{lime}{HTML}{A6CE39}
\DeclareRobustCommand{\orcidicon}{%
	\begin{tikzpicture}
		\draw[lime, fill=lime] (0,0)
		circle [radius=0.16]
		node[white] {{\fontfamily{qag}\selectfont \tiny ID}};
		\draw[white, fill=white] (-0.0625,0.095)
		circle [radius=0.007];
	\end{tikzpicture}
	\hspace{-2mm}
}

\foreach \x in {A, ..., Z}{%
	\expandafter\xdef\csname orcid\x\endcsname{\noexpand\href{https://orcid.org/\csname orcidauthor\x\endcsname}{\noexpand\orcidicon}}
}

\begin{document}
	
	\title{Commutativity of factor ring $R/P$ via $\ast$-reverse $P$-derivations}
	
	\subjclass[2010]{16W10; 16N60; 16W25}
	
	\keywords{associative rings, prime ideals, involution,$\ast$-reverse derivations, $\ast$-reverse $P$-derivation}

		\author[D. Kumar]{Deepak Kumar}
	
	\address{ 
		Department of Mathematics \\ 
		Punjabi University \\ 
		Patiala-147002\\
		India}
\email{deep\_math1@yahoo.com}

		\author[S. Singh]{Sukhchain Singh}
	\address{ 
		Department of Mathematics \\ 
		Punjabi University \\ 
		Patiala-147002\\
		India\\
		ORCID: 0000-0001-6127-0731}
	\email{sukhchain\_rs@pbi.ac.in}


\maketitle

\begin{abstract}
	  
	  In this paper, we establish commutativity criteria for the quotient ring $R/P$ through pairs of $\ast$-reverse $P$-derivations satisfying suitable commutator and anti-commutator differential identities. As applications, a number of results concerning $\ast$-reverse derivations on prime rings are recovered.  
\end{abstract}

\section{Introduction}

Throughout this article, $R$ denotes a ring with center $Z(R)$. An ideal $P$ of $R$ is called a \emph{prime ideal} if $aRb\subseteq P$ implies that either $a\in P$ or $b\in P$. A ring $R$ is said to be a \emph{prime ring} whenever its zero ideal $(0)$ is prime. An \emph{involution} on $R$ is an anti-automorphism $\ast$ satisfying $(a^\ast)^\ast=a$ for every $a\in R$. An element $a\in R$ is called \emph{symmetric} (respectively, \emph{skew-symmetric}) if $a^\ast=a$ (respectively, $a^\ast=-a$). The sets of all symmetric and skew-symmetric elements of $R$ are denoted by $H(R)$ and $S(R)$, respectively.

For any nonempty subset $D$ of $R$, an involution $\ast$ is said to be of $D$-\emph{second kind} if $S(R)\cap Z(R)\nsubseteq D$; otherwise, it is called of $D$-\emph{first kind} (see \cite{Rehman24}). In the special case $D={0}$, these notions reduce to the usual involutions of the second and first kind, respectively. Throughout the paper, $[a,b]$ and $a\circ b$ denote the commutator $ab-ba$ and the anti-commutator $ab+ba$, respectively. For any $a,b,c\in R$, we have following identities which works as an important tools in our proofs: 	
\begin{itemize}
	\item[(i)]  $(a\circ bc)=(a\circ b)c-b[a,c]=b(a\circ c)+[a,b]c$
	\item[(ii)]  $(ab\circ c)=a(b\circ c)-[a,c]b=(a\circ c)b+a[b,c]$
	\item[(iii)] $[a,bc]=[a,b]c+b[a,c] $.
	\end{itemize}\par 

 An additive mapping $d:R\to R$ is called a \emph{derivation} if $d(ab)=d(a)b+ad(b)$ for all $a,b\in R$. In an attempt to develop an analogous theory, Herstein \cite{Herstein} introduced the notion of a \emph{reverse derivation} as an additive mapping $d:R\to R$ satisfying $d(ab)=d(b)a+bd(a)$ for all $a,b\in R$. He proved that a noncommutative prime ring admits no nonzero reverse derivation. Later, Samman \cite{Samman2006} showed that every reverse derivation on a semiprime ring is a central derivation. These results reveal that the behaviour of reverse derivations on prime and semiprime rings is rather restrictive.

The presence of an involution naturally leads to the study of differential mappings compatible with this additional structure. Bre\v{s}ar and Vukman \cite{bresar1989} introduced the notion of a $\ast$-derivation on rings with involution. Motivated by their work, Bhushan et al. \cite{bhushan2020} proposed the concept of a $\ast$-reverse derivation. An additive mapping $\mathfrak{d}:R\to R$ is called a $\ast$-reverse derivation if $\mathfrak{d}(ab)=\mathfrak{d}(b)a^\ast+b^\ast\mathfrak{d}(a)$ for all $a,b\in R$. An example of such a mapping is given by $a\mapsto[a^\ast,r]$, where $r$ is a fixed element of $R$. This mapping is a $\ast$-reverse derivation but not a derivation, illustrating that the theory of $\ast$-reverse derivations remains nontrivial on prime rings and provides a useful framework for investigating their structural properties.

 On the other hand, Sandhu et al. \cite{Sandhu23} introduced the notion of a $P$-derivation using a prime ideal $P$ of $R$. A mapping $d:R\to R$ is called a $P$-derivation if $d(a+b)-d(a)-d(b)\in P$ and $d(ab)-d(a)b-ad(b)\in P$
for all $a,b\in R$. The concept of a $P$-derivation extends the notion of derivation modulo a prime ideal, while a $\ast$-reverse derivation incorporates the involution structure of the underlying ring. It is therefore natural to investigate mappings that combine these two features, leading to the notion of a $\ast$-reverse $P$-derivation.\par 

The study of differential identities has attracted considerable attention since the pioneering work of Herstein \cite{herstein78}, who showed that a prime ring $R$ with $\mathrm{char}(R)\neq 2$ must be commutative whenever there exists a nonzero derivation $d$ satisfying $[d(x),d(y)]=0$ for all $x,y\in R$. Since then, differential identities associated with various derivations have been widely used to investigate the structure of prime rings and rings possessing prime ideals. Notable contributions in this direction include \cite{Mamouni2018, Bhushan2021, Deepak24, ElMir2020, M.Ashraf2002, M.A.Idrissi2022}. More recently, Khan et al. \cite{Khan2021} extended Herstein's philosophy to involutive rings with prime ideals. One of their result is formulated as follows.

\textit{
	Suppose that $R$ is a ring with an involution $\ast$ of the second kind and $P$ is a prime ideal of $R$ satisfying $S(R)\cap Z(R)\nsubseteq P$ and $\mathrm{char}(R/P)\neq 2$. Let $d_1$ and $d_2$ be derivations of $R$ such that
	$$
	[d_1(x),d_2(x^\ast)]\in P
	$$
	for every $x\in R$. Then at least one of the following alternatives occurs:
	\begin{itemize}
		\item[(a)] $d_1(R)\subseteq P$;
		\item[(b)] $d_2(R)\subseteq P$;
		\item[(c)] the factor ring $R/P$ is a commutative integral domain.
	\end{itemize}
}

 Motivated by these developments, we introduce the notion of a $\ast$-reverse $P$-derivation and study pairs of such mappings through suitable commutator and anti-commutator identities. More precisely, our main results are established under the assumptions that, for all $a\in R$,
\begin{itemize}
	\item[(i)] $[\delta_1(a),\delta_2(a^\ast)]\pm[a,a^\ast]\in P$,
	\item[(ii)] $\delta_1(a)\circ\delta_2(a^\ast)\pm a\circ a^\ast\in P$.
\end{itemize}
These conditions enable us to obtain commutativity criteria for the factor ring $R/P$ and to determine situations in which the images of the associated $\ast$-reverse $P$-derivations are necessarily contained in the prime ideal $P$.

\section{Preliminaries}

In this section, we recall certain definitions and auxiliary results that will be used throughout the paper. We begin by introducing the notion of a $\ast$-reverse $P$-derivation and present an illustrative example highlighting its distinction from a $\ast$-reverse derivation.

\begin{definition}
	A map $\delta: R\to R$ is called $\ast$-reverse $P$-derivation on ring $R$ with involution $\ast$ if for all $a,b\in R$, we have
	\begin{itemize}
		\item[(a)] $\delta(a+b)-\delta(a)-\delta(b)\in P$
		\item[(b)] $\delta(ab)-\delta(b)a^{\ast}-b^{\ast}\delta(a)\in P$.
	\end{itemize}

\begin{example}
	Let a ring $R=Q\times Q$, where $Q$ is any prime ring with an involution-$\ast$. Clearly the ideal $P=\{0\}\times Q$ is a prime ideal of $R$. For any fixed elements $p,q \in Q, q\neq 0$, define a map $\delta:R\to R$  as 
	$$ \delta ((a,b))= ([a^{\ast},p],q) \text{~for~all~}(a,b)\in R$$
	Thus the map $\delta$ is a $\ast$-reverse $P$-derivation on ring $R$ which is not a $\ast$-reverse derivation.
\end{example}
\end{definition}

\begin{lemma}\label{lemmaA}
	
	Let $R$ be a prime ring with involution-$\ast$. If $\delta$ is a $\ast$-reverse P-derivation on $R$ such that $[\delta(a), a^{\ast}]\in P$ for all $a\in R$, then either $\delta(R)\subseteq P$ or $R/P$ is a commutative integral domain.
\end{lemma}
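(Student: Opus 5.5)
The plan is to work modulo $P$ throughout, linearize the hypothesis, and reduce it to a product identity $\delta(a)\,R\,[r,a^\ast]\subseteq P$. Primeness of $P$ then splits $R$ into two additive subgroups, and Brauer's trick finishes the argument. Since $\delta$ is additive only modulo $P$, and $P$ is an ideal, any expression that is a sum of products with a factor in $P$ lies in $P$. We may therefore replace $\delta(a+b)$ by $\delta(a)+\delta(b)$, and $\delta(ab)$ by $\delta(b)a^\ast+b^\ast\delta(a)$, inside commutators without leaving the congruence class mod $P$.

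\emph{Step 1 (linearization).} Replacing $a$ by $a+b$ in $[\delta(a),a^\ast]\in P$ and cancelling the two instances of the hypothesis gives
$$[\delta(a),b^\ast]+[\delta(b),a^\ast]\in P\quad\text{for all }a,b\in R. \qquad (1)$$

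\emph{Step 2 (substitution).} Replace $b$ by $ba$ in (1). Then $(ba)^\ast=a^\ast b^\ast$ and $\delta(ba)\equiv\delta(a)b^\ast+a^\ast\delta(b)$. Expanding with identity (iii) produces the terms
$$a^\ast[\delta(a),b^\ast],\quad [\delta(a),a^\ast]b^\ast \text{ (twice)},\quad \delta(a)[b^\ast,a^\ast],\quad a^\ast[\delta(b),a^\ast],\quad [a^\ast,a^\ast]\delta(b).$$
The terms containing $[\delta(a),a^\ast]$ lie in $P$ by hypothesis, and the last term vanishes. The terms $a^\ast[\delta(a),b^\ast]+a^\ast[\delta(b),a^\ast]$ combine to $a^\ast$ times (1), so they also lie in $P$. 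What remains is $\delta(a)[b^\ast,a^\ast]\in P$. Since $\ast$ is surjective, this says $\delta(a)[c,a^\ast]\in P$ for all $a,c\in R$.

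\emph{Step 3 (primeness).} Replace $c$ by $cr$ and use $[cr,a^\ast]=c[r,a^\ast]+[c,a^\ast]r$. The second piece contributes $\delta(a)[c,a^\ast]r\in P$, so $\delta(a)c[r,a^\ast]\in P$. Hence $\delta(a)R[r,a^\ast]\subseteq P$ for all $r$, and since $P$ is prime, for each $a$ either $\delta(a)\in P$ or $[R,a^\ast]\subseteq P$.

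\emph{Step 4 (Brauer's trick).} Set $A=\{a:\delta(a)\in P\}$ and $B=\{a:[R,a^\ast]\subseteq P\}$.
\begin{itemize}
\item $A$ is an additive subgroup because $\delta$ is additive modulo $P$; we also need $\delta(-a)\equiv-\delta(a)$ and $\delta(0)\in P$, both of which follow from condition (a) of the definition.
\item $B$ is clearly an additive subgroup.
\end{itemize}
Since $R=A\cup B$ and a group cannot be the union of two proper subgroups, either $R=A$, giving $\delta(R)\subseteq P$, or $R=B$. In the latter case, since $\ast$ is onto, $[R,R]\subseteq P$, so $R/P$ is commutative. A commutative prime ring has no zero divisors, so $R/P$ is a commutative integral domain.

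I expect the main obstacle to be the bookkeeping in Step 2. One must check carefully that every error term introduced by the modulo-$P$ additivity and the modulo-$P$ reverse Leibniz rule lands in $P$, and that the terms regroup exactly into a multiple of (1). A secondary point is that $A$ really is a subgroup, which requires extracting $\delta(0)\in P$ and $\delta(-a)+\delta(a)\in P$ from condition (a).
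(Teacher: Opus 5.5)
Your proof is correct and follows the paper's argument essentially step for step. You linearize, substitute $b\mapsto ba$ and cancel using the hypothesis and the linearized identity to reach $\delta(a)[b^\ast,a^\ast]\in P$, insert a factor of $R$ by a product substitution (the paper uses $b\mapsto b^\ast r^\ast$, you use $c\mapsto cr$) to get $\delta(a)R[r,a^\ast]\subseteq P$, and finish with Brauer's trick on the subgroups $A$ and $B$; you are slightly more careful than the paper in explicitly verifying via condition (a) that $A$ is a subgroup and that a commutative prime quotient has no zero divisors.
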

\begin{proof}
	Given that 
	\begin{equation}\label{L11}
		[\delta(a), a^{\ast}]\in P \text{~for~all~} a\in R
	\end{equation}
	Linearizing the above relation, we find
	\begin{equation}\label{L1a}
		[\delta(a), b^{\ast}]+[\delta(b), a^{\ast}] \in P\text{~for~all~}a,b\in R
	\end{equation}
	Changing $b$ into $ba$ for any $a\in R$, in equation (\ref{L1a}), we get
	\begin{equation}\label{L1b}
		[\delta(a), a^{\ast}]b^{\ast}+ a^{\ast}[\delta(a), b^{\ast}]+[\delta(a)b^{\ast}+a^{\ast}\delta(b), a^{\ast}] \in P\text{~for~all~}a,b\in R
	\end{equation}
	On solving and using the equation (\ref{L11}), to find
	\begin{equation}\label{L1c}
		a^{\ast}[\delta(a), b^{\ast}]+\delta(a)[b^{\ast}, a^{\ast}] + a^{\ast}[\delta(b),a^{\ast}]\in P\text{~for~all~}a,b\in R
	\end{equation}
	Using the equation (\ref{L1a}), we demonstrate
	\begin{equation}\label{L1d}
		\delta(a)[b^{\ast}, a^{\ast}] \in P\text{~for~all~}a,b\in R
	\end{equation}
	Changing $b$ into $b^{\ast} r^{\ast}$ for any $r\in R$, in equation (\ref{L1d}), we get
	\begin{equation}\label{L1e}
		\delta(a)r[b, a^{\ast}] \in P\text{~for~all~}a,b,r\in R
	\end{equation}

Now let $A=\{a~| ~\delta(a)\in P\}$ and $B=\{a~|~ [b,a^{\ast}]\in P~ \forall b\in R\}$. Clearly $A$ and $B$ both are additive subgroups of ring $R$ and $R=A\cup B$. But $R$ cannot be union of two proper subgroups. Thus either $A=R$ or $B=R$, means either $\delta(R)\subseteq P$ or $[b,a^{\ast}]\in P$ for all $a,b\in R$ i.e., $\overline{[b,a]}= \overline{0}$ in ring $R/P$ for all $a,b\in R$, which means $R/P$ is commutative.

\end{proof}

\begin{lemma}\label{lemmaB}
	Let $R$ be an arbitrary ring with prime ideal $P.$ If $\delta:R\to R$ is a $\ast$-reverse $P$-derivation of $R,$ then for all  $z\in Z(R)$,  $\overline{\delta(z)}\in  Z(R/P).$
\end{lemma}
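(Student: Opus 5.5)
The plan is to compare the two expansions of $\delta$ on the single element $zr=rz$ for an arbitrary $r\in R$. Since $\delta$ is a map, $\delta(zr)=\delta(rz)$ exactly. Applying condition (b) of the definition of a $\ast$-reverse $P$-derivation to each factorization gives two congruences modulo $P$, and subtracting them should isolate a commutator involving $\delta(z)$.

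First I will check that the involution preserves the center: if $z\in Z(R)$, then $z^\ast\in Z(R)$. For any $s\in R$,
$$z^\ast s=(s^\ast z)^\ast=(z s^\ast)^\ast=s z^\ast,$$
using that $\ast$ is an anti-automorphism with $(s^\ast)^\ast=s$.

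Next, fix $z\in Z(R)$ and $r\in R$. Condition (b) yields
$$\delta(zr)-\delta(r)z^\ast-r^\ast\delta(z)\in P,\qquad \delta(rz)-\delta(z)r^\ast-z^\ast\delta(r)\in P.$$
Because $zr=rz$, subtracting the two memberships gives
$$\delta(r)z^\ast+r^\ast\delta(z)-\delta(z)r^\ast-z^\ast\delta(r)\in P.$$
Since $z^\ast$ is central, $\delta(r)z^\ast=z^\ast\delta(r)$, so these two terms cancel. What remains is $[r^\ast,\delta(z)]\in P$ for all $r\in R$.

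Finally, as $r$ runs over $R$, $r^\ast$ also runs over all of $R$, because $\ast$ is a bijection. Hence $[s,\delta(z)]\in P$ for every $s\in R$. This says exactly that $\overline{\delta(z)}$ commutes with every $\overline{s}$ in $R/P$, that is, $\overline{\delta(z)}\in Z(R/P)$.

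I do not expect a genuine obstacle. The only points needing care are the centrality of $z^\ast$ and the observation that no additivity of $\delta$ is needed, since $\delta(zr)$ and $\delta(rz)$ are literally the same element.
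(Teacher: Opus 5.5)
Your proof is correct and follows essentially the same route as the paper: expand $\delta(zr)=\delta(rz)$ in both factorizations using condition (b), cancel the $\delta(r)$ terms by centrality of $z^\ast$, and conclude $[r^\ast,\delta(z)]\in P$ for all $r\in R$. You also supply details the paper leaves implicit, namely the verification that $z^\ast\in Z(R)$ and that $r^\ast$ ranges over all of $R$.
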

\begin{proof}
	Let $z\in Z(R)$. Then we have
	\begin{equation}\label{A}
		\delta(az)-\delta(z)a^{\ast}-z^{\ast}\delta(a)\in P\text{~for~all~}a\in R.
	\end{equation}
	and
	\begin{equation}\label{B}
		\delta(za)-\delta(a)z^{\ast}-a^{\ast}\delta(z)\in P\text{~for~all~}a\in R.
	\end{equation}
	Using the fact that $z^{\ast} \in Z(R)$ if $z\in Z(R)$ and comparing (\ref{A}) and (\ref{B}), we get $[\delta(z),a^{\ast}]\in P$ for all $a\in R.$ It forces that $\overline{\delta (z)}\in  Z(R/P).$
\end{proof}

\section{Main Results}

This section is devoted to the study of $\ast$-reverse $P$-derivations satisfying several differential identities involving commutators and anti-commutators. Under suitable assumptions on the involution and the prime ideal, we obtain conditions that force the quotient ring $R/P$ to be commutative.

\begin{proposition}\label{thm1}
	Let $R$ be a ring with involution $\ast$, $P$ is a prime ideal of $R$ such that char($R/P$)$\neq 2.$ If $\delta_1,\delta_2$ are two $\ast$-reverse $P$-derivations on $R$ such that $[\delta_1(a),\delta_2(b)]\pm [a^\ast, b^\ast]\in P$ for all $a,b\in R$ then either
	\begin{itemize}
		\item[(i)] $\delta_1 (R)\subseteq P$, $\delta_2 (R)\subseteq P$ or
		\item[(ii)] $R/P$ is commutative integral domain.
	\end{itemize}
	  
\end{proposition}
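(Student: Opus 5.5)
Work throughout in $\overline{R}=R/P$, which is a prime ring with $\mathrm{char}\neq 2$. Write $\bar x$ for the image of $x$. Since $\delta_i$ is additive and multiplicative only modulo $P$, each $\delta_i$ induces a genuine map on $R$ whose values are correct modulo $P$. The hypothesis becomes $[\overline{\delta_1(a)},\overline{\delta_2(b)}]=\mp[\bar a^\ast,\bar b^\ast]$, where I write $\bar a^\ast$ for $\overline{a^\ast}$. Note that $\ast$ need not preserve $P$, so I will only manipulate representatives in $R$ and reduce at the end, as Lemma \ref{lemmaA} does.

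\textbf{Step 1: replace $b$ by $bc$.} Using $\delta_2(bc)\equiv\delta_2(c)b^\ast+c^\ast\delta_2(b)$ and identity (iii), the left side expands to
\[
[\delta_1(a),\delta_2(c)]b^\ast+\delta_2(c)[\delta_1(a),b^\ast]+[\delta_1(a),c^\ast]\delta_2(b)+c^\ast[\delta_1(a),\delta_2(b)].
\]
The right side is $\pm[a^\ast,c^\ast b^\ast]=\pm\big([a^\ast,c^\ast]b^\ast+c^\ast[a^\ast,b^\ast]\big)$. Subtracting the hypothesis applied to the pairs $(a,c)$ and $(a,b)$ cancels the first and last terms on each side. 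This leaves
\[
\delta_2(c)[\delta_1(a),b^\ast]+[\delta_1(a),c^\ast]\delta_2(b)\in P \quad\text{for all } a,b,c.
\]
Taking $c=b$ gives a symmetric relation. Next I substitute $c\mapsto cr$, or take $b=c$ and then replace $b\mapsto br$, to isolate a term of the form $\delta_2(c)\,r\,[\delta_1(a),b^\ast]\in P$. This substitution is the main obstacle, because the second summand regenerates terms through the product rule. My first attempt is $c\mapsto c z$ for suitable $z$, combined with the previous identity, to kill the extra summand. Once a relation $\delta_2(c)R[\delta_1(a),b^\ast]\subseteq P$ holds, primeness gives, for each $a$, either $\delta_2(R)\subseteq P$ or $[\delta_1(a),b^\ast]\in P$ for all $b$. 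By the subgroup-union argument used in Lemma \ref{lemmaA}, this yields either $\delta_2(R)\subseteq P$ or $[\delta_1(a),b^\ast]\in P$ for all $a,b$.

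\textbf{Step 2: the case $[\delta_1(a),b^\ast]\in P$ for all $a,b$.} In particular $[\delta_1(a),a^\ast]\in P$, so the argument of Lemma \ref{lemmaA} applies verbatim (it uses only the primeness of $P$). Hence $\delta_1(R)\subseteq P$ or $R/P$ is commutative. Commutativity of the prime ring $R/P$ already makes it an integral domain, giving (ii).

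\textbf{Step 3: the case $\delta_2(R)\subseteq P$, or symmetrically $\delta_1(R)\subseteq P$.} Then the hypothesis reduces to $[a^\ast,b^\ast]\in P$ for all $a,b$. Since $\ast$ is a bijection, this means $[R,R]\subseteq P$, so $R/P$ is a commutative integral domain and (ii) holds. Similarly, if $\delta_1(R)\subseteq P$, the commutator term with $\delta_1(a)$ vanishes and again $[R,R]\subseteq P$. So alternative (i) never actually occurs apart from (ii), and the conclusion follows. It remains to make the symmetric version of Step 1 (expanding in $a$) consistent, in case the isolation works only for $\delta_1$.
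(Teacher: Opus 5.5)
You set up Step 1 correctly: your relation is the paper's $\delta_2(b)[\delta_1(a),c^\ast]+[\delta_1(a),b^\ast]\delta_2(c)\in P$ with $b,c$ swapped. Your Step 3 remark, that alternative (i) already forces $[R,R]\subseteq P$, is correct and actually sharpens the statement. The gap is the step you call the main obstacle: you never reach a one-term relation to which primeness of $P$ applies, and your suggested remedy does not get there. If $z$ is central, $c\mapsto cz$ gives only $\delta_2(z)R[\delta_1(a),b^\ast]\subseteq P$. The bad case $\delta_2(Z(R))\subseteq P$ then carries no information, because the hypothesis at $(a,z)$ is vacuous ($[a^\ast,z^\ast]=0$). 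For arbitrary $r$, $c\mapsto cr$ gives $\delta_2(r)\,x\,[\delta_1(a),b^\ast]+[\delta_1(a),r^\ast]\,x\,\delta_2(b)\in P$ for all $x\in R$, which still has two summands. So the relation $\delta_2(c)R[\delta_1(a),b^\ast]\subseteq P$ for \emph{all} $c$ is never established, and your case split ``$\delta_2(R)\subseteq P$ or $[\delta_1(a),b^\ast]\in P$'' depends on it.

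The missing idea is a self-annihilating substitution together with a second use of the hypothesis. Put $b=\delta_1(a)^\ast$ in your relation. Then $[\delta_1(a),b^\ast]=[\delta_1(a),\delta_1(a)]=0$, which leaves $[\delta_1(a),c^\ast]\delta_2(\delta_1(a)^\ast)\in P$. Now $c\mapsto cr$ turns this into $[\delta_1(a),r^\ast]\,R\,\delta_2(\delta_1(a)^\ast)\subseteq P$. Here $\delta_2$ is evaluated only at the specific element $\delta_1(a)^\ast$, so the dichotomy is ``$\delta_2(\delta_1(a)^\ast)\in P$'', not ``$\delta_2(R)\subseteq P$''. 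If $\delta_2(\delta_1(a)^\ast)\notin P$, primeness gives $[\delta_1(a),r^\ast]\in P$ for all $r$. If $\delta_2(\delta_1(a)^\ast)\in P$, use the hypothesis for the pair $(r,\delta_1(a)^\ast)$, namely $[\delta_1(r),\delta_2(\delta_1(a)^\ast)]\pm[r^\ast,\delta_1(a)]\in P$, which gives $[r^\ast,\delta_1(a)]\in P$ for all $r$. The paper does this differently: it replaces $c$ by $\delta_1(r)^\ast c$ to get $[\delta_1(a),r^\ast]R[\delta_1(r),\delta_2(\delta_1(a)^\ast)]\subseteq P$, then substitutes the same instance of the hypothesis to reach $[\delta_1(a),r^\ast]R[\delta_1(a),r^\ast]\subseteq P$. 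Either way $[\delta_1(a),R]\subseteq P$ for every $a$. The hypothesis itself then yields $[a^\ast,b^\ast]\in P$ for all $a,b$, or you may apply Lemma \ref{lemmaA} as in your Step 2. The symmetric argument for $\delta_2$ that you leave open is not needed.
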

\begin{proof}
	We have 
	\begin{equation}\label{4a}
		[\delta_1(a),\delta_2(b)]\pm[a^\ast ,b^\ast]\in P \text{~for~all~} a,b \in R
	\end{equation}
	Replacing $b$ into $cb$, where $c\in R$ in the equation (\ref{4a}), we get
	\begin{equation}\label{4b}
		\begin{split}
			[\delta_1(a), \delta_2(b)]c^\ast +\delta_2(b)[\delta_1(a),c^\ast]+[\delta_1(a),b^\ast]\delta_2(c) \\ +b^\ast[\delta_1(a), \delta_2(c)] 
			\pm b^\ast [a^\ast,c^\ast]\pm[a^\ast,b^\ast]c^\ast \in P
		\end{split}		 
	\end{equation}
	Using (\ref{4a}) in the equation (\ref{4b}), we find
	\begin{equation}\label{4c}
		\delta_2(b)[\delta_1(a),c^\ast]+[\delta_1(a),b^\ast]\delta_2(c)\in P
	\end{equation}
	Taking $c=\delta_1(a)^\ast$ in (\ref{4c}) we get
	\begin{equation}\label{4d}
		[\delta_1(a),b^\ast]\delta_2(\delta_1(a)^\ast)\in P
	\end{equation}
	For any $r\in R$, replacing $b$ into $br$ in the equation (\ref{4d}) to get
	\begin{equation}\label{4e}
		[\delta_1(a),r^\ast]b^\ast \delta_2(\delta_1(a)^\ast) \in P
	\end{equation}	
	Now changing $b$ into $\delta_1(r)^\ast b$ and using (\ref{4e}), we get
	\begin{equation}\label{4g}
		[\delta_1(a),r^\ast] b^\ast [\delta_2(\delta_1(a)^\ast), \delta_1(r)] \in P
	\end{equation}
	From (\ref{4a}), we find 
	\begin{equation}\label{4g1}
		[\delta_2(\delta_1(a)^\ast), \delta_1(r)]\pm [\delta_1(a), r^{\ast}]\in P
	\end{equation}
	Using (\ref{4g}) and (\ref{4g1}), we obtain
	\begin{equation}\label{4h}
		[\delta_1(a),r^\ast] b^\ast [\delta_1(a),r^\ast] \in P
	\end{equation}
	It is reduced to 
	\begin{equation}\label{4h}
		[\delta_1(a),r^\ast] R [\delta_1(a),r^\ast] \subseteq P
	\end{equation}
	As $P$ is a prime ideal of $R$ thus by taking $r=a$, we obtain
	\begin{equation}\label{4i}
		[\delta_1(a),a^\ast]\in P \text{~for~all~}x,y\in R
	\end{equation}
	Hence by using the Lemma \ref{lemmaA}, either $\delta_1(R)\subseteq P$ or $R/P$ is commutative. Similarly by taking suitable substitutions we obtain either $\delta_2(R)\subseteq P$ or $R/P$ is commutative. 
	
\end{proof}
\begin{corollary}\label{cor1}
		Let $R$ be a ring with involution $\ast$, $P$ is a prime ideal of $R$ such that char($R/P$)$\neq 2$ and $\delta$ be a $\ast$-reverse P-derivation. If $[\delta(a),\delta(b)]\pm[a^\ast ,b^\ast]\in P$ for all $a,b \in R$, then either $\delta (R)\subseteq P$ or $R/P$ is commutative.
\end{corollary}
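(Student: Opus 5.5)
This corollary is the special case $\delta_1=\delta_2=\delta$ of Proposition \ref{thm1}, so the plan is a direct specialization rather than a new argument. Setting $\delta_1=\delta_2=\delta$, the hypothesis $[\delta(a),\delta(b)]\pm[a^\ast,b^\ast]\in P$ for all $a,b\in R$ is exactly the hypothesis (\ref{4a}) of Proposition \ref{thm1}. The standing assumptions also match: $R$ has an involution, $P$ is prime, $\mathrm{char}(R/P)\neq 2$, and $\delta$ is a $\ast$-reverse $P$-derivation.

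Applying Proposition \ref{thm1}, either $\delta_1(R)\subseteq P$ and $\delta_2(R)\subseteq P$, which here both say $\delta(R)\subseteq P$, or $R/P$ is a commutative integral domain. In particular $R/P$ is commutative, which is the second alternative.

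For a self-contained check, I would rerun the first half of the proof of Proposition \ref{thm1} with $\delta_1=\delta_2=\delta$, in four steps.
\begin{itemize}
\item[(1)] Replace $b$ by $cb$ to reach $\delta(b)[\delta(a),c^\ast]+[\delta(a),b^\ast]\delta(c)\in P$.
\item[(2)] Put $c=\delta(a)^\ast$ and then replace $b$ by $br$ and by $\delta(r)^\ast b$.
\item[(3)] Use the hypothesis in the form $[\delta(\delta(a)^\ast),\delta(r)]\pm[\delta(a),r^\ast]\in P$ to get $[\delta(a),r^\ast]R[\delta(a),r^\ast]\subseteq P$.
\item[(4)] Conclude by primeness that $[\delta(a),a^\ast]\in P$, and finish with Lemma \ref{lemmaA}.
\end{itemize}

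The only delicate point is the step inherited from the proposition, where $[\delta(\delta(a)^\ast),\delta(r)]$ is replaced by $\mp[\delta(a),r^\ast]$ inside a product. This relies on additivity modulo $P$ and on the conjugation $(\delta(a)^\ast)^\ast=\delta(a)$. Since the corollary simply invokes the already established proposition, I do not expect any genuine obstacle.
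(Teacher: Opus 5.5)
Your proposal is correct and follows the paper's route exactly: the paper gives no separate proof of this corollary, treating it as the specialization $\delta_1=\delta_2=\delta$ of Proposition~\ref{thm1}. Your four-step check also faithfully tracks the first half of that proposition's proof (down to $[\delta(a),a^\ast]\in P$ and then Lemma~\ref{lemmaA}), which is all that is needed here since $\delta_1=\delta_2$.
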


\begin{corollary}\label{cor1a}
Let $R$ be a prime ring such that char($R$)$\neq 2$ and $d_1$, $d_2$ are $\ast$-reverse derivations. If $[d_1(a),d_2(b)]\pm [a^\ast ,b^\ast]=0$ for all $a,b \in R$, then $R$ is commutative.
\end{corollary}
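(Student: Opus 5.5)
The plan is to specialize Proposition~\ref{thm1} to $P=(0)$. Since $R$ is prime, the zero ideal is a prime ideal, and $R/(0)\cong R$, so $\mathrm{char}(R/P)=\mathrm{char}(R)\neq 2$. A $\ast$-reverse derivation satisfies $\mathfrak{d}(a+b)=\mathfrak{d}(a)+\mathfrak{d}(b)$ and $\mathfrak{d}(ab)=\mathfrak{d}(b)a^\ast+b^\ast\mathfrak{d}(a)$ exactly. Hence the expressions in (a) and (b) of the definition vanish, and $d_1,d_2$ are $\ast$-reverse $(0)$-derivations. The hypothesis $[d_1(a),d_2(b)]\pm[a^\ast,b^\ast]=0$ is precisely the condition of Proposition~\ref{thm1} with $P=(0)$.

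Proposition~\ref{thm1} then gives two alternatives. Either $R\cong R/(0)$ is a commutative integral domain, and we are done, or $d_1(R)=0$ and $d_2(R)=0$. Note that the statement of Corollary~\ref{cor1a} asserts commutativity unconditionally, with no exceptional case for zero maps. So the second alternative must itself be shown to force commutativity, and this is where I expect the only real work.

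If $d_1=d_2=0$, the hypothesis reduces to $[a^\ast,b^\ast]=0$ for all $a,b\in R$. Since $\ast$ is a bijection of $R$, as $a,b$ run over $R$ the elements $a^\ast,b^\ast$ run over all of $R$. Hence $[x,y]=0$ for all $x,y\in R$, and $R$ is commutative. (Equivalently, $[a^\ast,b^\ast]=[b,a]^\ast$, so $[b,a]=0$.) Combining the two cases, $R$ is commutative in every situation.

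The main point to check is that nothing in Proposition~\ref{thm1} secretly requires $P\neq(0)$, and it does not, since only primeness of $P$ and $\mathrm{char}(R/P)\neq 2$ are used. The case where the maps vanish is not an obstacle but must not be forgotten: it is handled directly by the identity term $[a^\ast,b^\ast]$, which is what makes the conclusion unconditional. In fact, the same observation applied to $\delta_1=\delta_2=0$ modulo $P$ shows that alternative (i) of Proposition~\ref{thm1} already implies (ii), though that is not needed here.
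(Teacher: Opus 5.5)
Your proposal is correct and follows the paper's intended route: the paper states this corollary without proof, as the specialization $P=(0)$ of Proposition~\ref{thm1}. Your explicit treatment of the alternative $d_1=d_2=0$ fills a step the paper leaves implicit when it asserts unconditional commutativity: in that case the hypothesis reduces to $[a^\ast,b^\ast]=0$, hence $[b,a]=0$.
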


\begin{theorem}\label{thm2}
	Let $R$ be a ring with involution $\ast$ of $P$-second kind where $P$ is a prime ideal of $R$ such that char($R/P$)$\neq 2.$ If $\delta_1,\delta_2$ are two $\ast$-reverse $P$-derivations on $R$ such that $[\delta_1(a),\delta_2(a^\ast)]\pm [a,a^\ast]\in P$ for all $a\in R$ then either 	\begin{itemize}
		\item[(i)] $\delta_1 (R)\subseteq P$, $\delta_2 (R)\subseteq P$ or
	\item[(ii)] $R/P$ is commutative integral domain.
	\end{itemize}
\end{theorem}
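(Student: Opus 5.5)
The plan is to reduce to Proposition \ref{thm1}. Linearize the one-variable identity into a two-variable one, and use a central skew-symmetric element $z\in S(R)\cap Z(R)\setminus P$, which exists because $\ast$ is of $P$-second kind, to split off the two pieces. Replacing $a$ by $a+b$ in $[\delta_1(a),\delta_2(a^\ast)]\pm[a,a^\ast]\in P$ gives
\begin{equation*}
[\delta_1(a),\delta_2(b^\ast)]+[\delta_1(b),\delta_2(a^\ast)]\pm\big([a,b^\ast]+[b,a^\ast]\big)\in P \quad\text{for all } a,b\in R .
\end{equation*}
Additivity of $\delta_i$ modulo $P$ is enough here, since every commutator with an element of $P$ lies in $P$.

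First compute $\delta_i(zb)$ modulo $P$. From the $\ast$-reverse rule, $\delta_i(zb)\equiv\delta_i(b)z^\ast+b^\ast\delta_i(z)=-z\delta_i(b)+b^\ast\delta_i(z)$. By Lemma \ref{lemmaB}, $\overline{\delta_i(z)}$ is central in $R/P$. Substituting $b\mapsto zb$ in the linearized identity, with $(zb)^\ast=-zb^\ast$, gives, modulo $P$,
\begin{equation*}
-z[\delta_1(a),\delta_2(b^\ast)]-[\delta_1(a),b\,\delta_2(z)]-z[\delta_1(b),\delta_2(a^\ast)]+\delta_2(z)^{\ast\ast}\text{-type terms}\pm z\big(-[a,b^\ast]+[b,a^\ast]\big).
\end{equation*}
The terms $[\delta_1(a),b\delta_2(z)]$ and $[b^\ast\delta_1(z),\delta_2(a^\ast)]$ involve the central cosets $\overline{\delta_i(z)}$ and must be tracked. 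Multiply the original linearized identity by $z$ and add or subtract it to isolate pieces. The cleanest route is probably to first settle what $\delta_i(z)$ can be. So the first step is to substitute $a=z$ (or $a=zb$) into the one-variable identity and use $[z,\cdot]=0$. This should give $\overline{\delta_1(z)}\,[\,\cdot\,]$-type relations forcing, via primeness and $z\notin P$, either $\delta_i(z)\in P$ or commutativity of $R/P$ directly.

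Assuming $\delta_i(z)\in P$ after that case split, the substitution above collapses. Combining it with $z$ times the linearized identity and using $\mathrm{char}(R/P)\neq 2$ yields
\begin{equation*}
z\Big([\delta_1(a),\delta_2(b^\ast)]\pm[b,a^\ast]\Big)\in P .
\end{equation*}
Since $z$ is central and not in $P$, we have $zR\,w\subseteq P$ for $w$ the bracketed element, so $w\in P$. Writing $b$ for $b^\ast$, this reads $[\delta_1(a),\delta_2(b)]\pm[b^\ast,a^\ast]\in P$, which is exactly the hypothesis of Proposition \ref{thm1} with the sign flipped. Proposition \ref{thm1} then gives either $\delta_1(R),\delta_2(R)\subseteq P$ or that $R/P$ is a commutative integral domain. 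The integral-domain part follows because $P$ is prime and $R/P$ is commutative.

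The main obstacle is the bookkeeping of the $\delta_i(z)$ terms. If they do not vanish, I would run the argument of Lemma \ref{lemmaA}. Take the surviving relation, which should look like $\overline{\delta_2(z)}\,[\delta_1(a),b]\in P$ (and symmetrically for $\delta_1(z)$). Use centrality of $\overline{\delta_2(z)}$ and primeness to get either $\delta_2(z)\in P$ or $[\delta_1(a),b]\in P$ for all $a,b$. In the latter case $[\delta_1(a),a^\ast]\in P$, so Lemma \ref{lemmaA} gives $\delta_1(R)\subseteq P$ or $R/P$ commutative. If $\delta_1(R)\subseteq P$, the original identity reduces to $[a,a^\ast]\in P$. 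Replacing $a$ by $a+z b$ then yields $[b,b^\ast]$-relations and, by the usual second-kind trick with $\mathrm{char}\neq 2$, commutativity of $R/P$.
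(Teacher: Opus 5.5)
You correctly identify the step you flag as the main obstacle, and it is a genuine gap: you never derive the relations that control $\delta_i(z)$, and the routes you suggest do not produce them.

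\textbf{What already works.} Your endgame is the paper's last step. If $\delta_1(z),\delta_2(z)\in P$ for the fixed $z\in S(R)\cap Z(R)\setminus P$, then $b\mapsto zb$ together with $z$ times the linearized identity gives $[\delta_1(a),\delta_2(b)]\mp[a^\ast,b^\ast]\in P$, and Proposition~\ref{thm1} applies. Your case split is also sound once a relation like $[\delta_1(a),b]\delta_2(z)\in P$ (and its mirror) is available. This includes the branch $\delta_1(R)\subseteq P\Rightarrow [a,a^\ast]\in P\Rightarrow R/P$ commutative, which the paper skips with ``similarly''.

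\textbf{Why your suggested substitutions fail.}
\begin{itemize}
\item Setting $a=z$ in the one-variable identity is vacuous. By Lemma~\ref{lemmaB} the cosets $\overline{\delta_1(z)}$ and $\overline{\delta_2(z^\ast)}$ are central, and $[z,z^\ast]=0$.
\item Setting $a=zb$ gives a one-variable relation in which $[\delta_1(b),b]\delta_2(z)$, $\delta_1(z)[b^\ast,\delta_2(b^\ast)]$ and $\delta_1(z)\delta_2(z)[b,b^\ast]$ are entangled.
\item For the two-variable substitution $b\mapsto zb$, the correct expansion modulo $P$ is
\[
z[\delta_1(a),\delta_2(b^\ast)]-z[\delta_1(b),\delta_2(a^\ast)]\mp z[a,b^\ast]\pm z[b,a^\ast]-[\delta_1(a),b]\delta_2(z)+\delta_1(z)[b^\ast,\delta_2(a^\ast)]\in P .
\]
Your display has the wrong sign on the first term. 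Adding or subtracting $z$ times the linearized identity always leaves one principal commutator alive. So the $\delta_i(z)$-terms are never isolated, and saying the surviving relation ``should look like'' $\overline{\delta_2(z)}[\delta_1(a),b]$ assumes the key step.
\end{itemize}

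\textbf{The missing idea: substitute a symmetric central element first.}
\begin{itemize}
\item Take $h=z^2\in H(R)\cap Z(R)$. Because $h^\ast=h$, replacing $a$ by $ah$ in the linearized identity reproduces $h$ times that identity plus only the $\delta_i(h)$-terms. Subtracting leaves $\delta_1(h)[a^\ast,\delta_2(b^\ast)]+[\delta_1(b),a]\delta_2(h)\in P$.
\item Now replace $a$ by $az$. Since $(az)^\ast=-za^\ast$, this flips the sign of the first term only. Adding and subtracting, with $z\notin P$ and $\mathrm{char}(R/P)\neq 2$, gives separately $[\delta_1(b),a]\delta_2(h)\in P$ and $\delta_1(h)[a^\ast,\delta_2(b^\ast)]\in P$.
\item Finally, $\delta_i(z^2)\equiv -2z\,\delta_i(z)\pmod P$ turns these into $[\delta_1(b),a]\delta_2(z)\in P$ and $\delta_1(z)[a^\ast,\delta_2(b^\ast)]\in P$. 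These are exactly the relations your case split needs.
\end{itemize}
Equivalently, you can apply your own substitution twice, $b\mapsto z^2b$, and subtract $z^2$ times the linearized identity. This is the paper's route, and with it your argument closes.
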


\begin{proof}
	We have given for all $a\in R$
\begin{equation}\label{9a}
	 [\delta_1(a),\delta_2(a^\ast)]\pm [a,a^\ast]\in P
\end{equation}	
Polarizing the above relation to have for all $a,b\in R$
\begin{equation}\label{9b}
	[\delta_1(a),\delta_2(b^\ast)]+[\delta_1(b),\delta_2(a^\ast)]\pm [a,b^\ast]\pm [b,a^\ast]\in P
\end{equation}
Replacing $a$ into $ah$ for any $h\in H(R)\cap Z(R)$ and using the lemma \ref{lemmaB} and the equation (\ref{9b}), we get
\begin{equation}\label{9c}
	\delta_1(h)[a^\ast,\delta_2(b^\ast)]+[\delta_1(b),a]\delta_2(h)\in P
\end{equation}
Now replacing $a$ into $ak$ for any nonzero $k\in S(R)\cap Z(R)$ 
\begin{equation}\label{9df}
		k(-\delta_1(h)[a^\ast,\delta_2(b^\ast)]+[\delta_1(b),a]\delta_2(h))\in P
\end{equation}
Using the condition $S(R)\cap Z(R)\nsubseteq P$ and the primeness of $P$, we capture
\begin{equation}\label{9d}
	-\delta_1(h)[a^\ast,\delta_2(b^\ast)]+[\delta_1(b),a]\delta_2(h)\in P
\end{equation}
Adding the equation (\ref{9c}) and (\ref{9d}), we have
\begin{equation}\label{9e}
	2[\delta_1(b),a]\delta_2(h)\in P
\end{equation}
Since char($R/P$)$\neq 2$, so the above equation reduces to
\begin{equation}\label{9f}
	[\delta_1(b),a]\delta_2(h)\in P
\end{equation}
Taking $a=ar$ for any $r\in R$ and simplifying it by using (\ref{9f})
\begin{equation}\label{9f1}
	[\delta_1(b),a]r\delta_2(h)\in P \text{~i.e,~} [\delta_1(b),a]R\delta_2(h)\subseteq P
\end{equation}
It concludes for all $a,b\in R$ and $h\in H(R)\cap Z(R)$
\begin{equation}\label{9f2}
	\text{~either~} [\delta_1(b),a]\subseteq P \text{~or~} \delta_2(h) \subseteq P
\end{equation}

 For the first case,  we get either $\delta_1(R)\subseteq P$  or $R/P$ is commutative, by using the lemma \ref{lemmaA}.
In second case, we have $\delta_2(h)\in P$, changing $h=k^2$ for any $k\in S(R)\cap Z(R)$ to find $\delta_2(k)\in P$ for all $k\in S(R)\cap Z(R)$. Similarly we can easily obtain  $\delta_1(k)\in P$ for all $k\in S(R)\cap Z(R)$.\\
Now replacing $a$ into $ak$ in the equation (\ref{9b}) we have
\begin{equation}\label{9g}
	k(-[\delta_1(a),\delta_2(b^\ast)]+[\delta_1(b),\delta_2(a^\ast)])\pm k([a,b^\ast]-[b,a^\ast]) \in P
\end{equation}
Eliminating $k$ in the expression by using primesness of $P$ and the assumption $S(R)\cap Z(R)\nsubseteq P$, we have 
\begin{equation}\label{9h}
	-[\delta_1(a),\delta_2(b^\ast)]+[\delta_1(b),\delta_2(a^\ast)]\pm [a,b^\ast]\mp [b,a^\ast]\in P
\end{equation}
Using both the equations (\ref{9b}) and (\ref{9h}), we obtain
\begin{equation}\label{9i}
	[\delta_1(b),\delta_2(a^\ast)]\pm [a,b^\ast]\in P
\end{equation}
After taking $a^\ast$ instead of $a$ and interchanging variables, the above relation can be written as
\begin{equation}\label{9j}
	[\delta_1(a),\delta_2(b)]\mp [a^\ast ,b^\ast]\in  P
\end{equation}
Now by following the Proposition \ref{thm1}, we conclude that $R/P$ is commutative integral domain.
\end{proof}

\begin{corollary}
		Let $R$ be a ring with involution $\ast$ of $P$-second kind where $P$ is a prime ideal of $R$ such that char($R/P$)$\neq 2.$ If $\delta$ is $\ast$-reverse $P$-derivation on $R$ such that $[\delta(a),\delta(a^\ast)]\pm [a,a^\ast]\in P$ for all $a\in R$ then either 	$\delta (R)\subseteq P$ or
		$R/P$ is commutative integral domain.
\end{corollary}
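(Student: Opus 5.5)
This corollary is the special case $\delta_1=\delta_2=\delta$ of Theorem \ref{thm2}, so the plan is simply to specialize. The hypotheses match exactly: $\ast$ is of $P$-second kind, $P$ is prime with char$(R/P)\neq 2$, and the hypothesis $[\delta(a),\delta(a^\ast)]\pm[a,a^\ast]\in P$ for all $a\in R$ is the hypothesis of Theorem \ref{thm2} with $\delta_1=\delta_2=\delta$.

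Theorem \ref{thm2} then gives one of two outcomes. Either $\delta_1(R)\subseteq P$ and $\delta_2(R)\subseteq P$, which here both read $\delta(R)\subseteq P$. Or $R/P$ is a commutative integral domain. These are precisely the two alternatives of the corollary.

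For robustness I would also check that the internal steps of the proof of Theorem \ref{thm2} survive the identification $\delta_1=\delta_2$. The first is the polarization (\ref{9b}). The second is the passage through (\ref{9f2}), giving either $[\delta(b),a]\in P$ for all $a,b$ or $\delta(h)\in P$ for all $h\in H(R)\cap Z(R)$. In the first case Lemma \ref{lemmaA} applies directly, since $[\delta(b),a]\in P$ for all $a$ in particular gives $[\delta(b),b^\ast]\in P$. In the second case the symmetrization argument yields $[\delta(a),\delta(b)]\mp[a^\ast,b^\ast]\in P$, and Corollary \ref{cor1} (equivalently Proposition \ref{thm1} with equal maps) concludes.

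Nothing in that chain uses $\delta_1\neq\delta_2$, so no obstacle is expected. The only point to watch is that the phrase ``integral domain'' is inherited from Theorem \ref{thm2}: a commutative prime ring $R/P$ has no zero divisors, since $\bar a\bar b=0$ gives $aRb\subseteq P$ by commutativity modulo $P$.
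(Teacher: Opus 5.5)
Your proposal is correct and follows the paper exactly: the paper states this corollary without a separate proof, as the special case $\delta_1=\delta_2=\delta$ of Theorem~\ref{thm2}, and your observation that a commutative quotient by a prime ideal has no zero divisors covers the ``integral domain'' wording. Your check of the internal chain is also accurate, and with a single map it is slightly tighter than the paper's two-map argument, since the step ``similarly $\delta_1(k)\in P$'' in the second case becomes automatic once $\delta(k)\in P$ is known.
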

\begin{corollary}
		Let $R$ be a ring with involution $\ast$ of $P$-second kind where $P$ is a prime ideal of $R$ such that char($R/P$)$\neq 2.$ If $\delta$ is $\ast$-reverse $P$-derivation on $R$ such that $[\delta(a),\delta(a^\ast)]\pm [a,a^\ast]\pm a\circ a^\ast \in P$ for all $a\in R$ then either 	$\delta (R)\subseteq P$ or
	$R/P$ is commutative integral domain.
	
\end{corollary}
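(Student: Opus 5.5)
The plan is to reduce this corollary to the immediately preceding one, which treats the identity $[\delta(a),\delta(a^\ast)]\pm[a,a^\ast]\in P$, by showing that the extra term $a\circ a^\ast$ can be removed using the $P$-second kind hypothesis. The tool is the substitution $a\mapsto ak$ with a fixed $k\in S(R)\cap Z(R)\setminus P$. First we compute $\delta(ak)$ modulo $P$: by the definition, $\delta(ak)\equiv \delta(k)a^\ast + k^\ast\delta(a) = \delta(k)a^\ast - k\delta(a)$. By Lemma \ref{lemmaB}, $\overline{\delta(k)}\in Z(R/P)$. Similarly $(ak)^\ast = -ka^\ast$, so $\delta((ak)^\ast)\equiv -\delta(k)a - k\delta(a^\ast)$ (using $\delta(-x)\equiv-\delta(x)$, which follows from additivity mod $P$).

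Next we substitute into the hypothesis. Since $\overline{\delta(k)}$ and $k$ are central modulo $P$, the commutator $[\delta(ak),\delta((ak)^\ast)]$ expands into the four terms $-\delta(k)^2[a^\ast,a]$, $-k\delta(k)[a^\ast,\delta(a^\ast)]$, $k\delta(k)[\delta(a),a]$, and $k^2[\delta(a),\delta(a^\ast)]$. Meanwhile $[ak,(ak)^\ast]\equiv -k^2[a,a^\ast]$ and $ak\circ(ak)^\ast = -k^2(a\circ a^\ast)$. The terms linear in $\delta(k)$ are awkward, so the first step is to kill them. I would either first substitute $a\mapsto a+h$ type arguments to show $\delta(k)\in P$, or, more simply, linearize the hypothesis as in the proof of Theorem \ref{thm2} and compare the $a\mapsto ak$ and $a\mapsto ah$ ($h\in H\cap Z$, e.g.\ $h=k^2$) versions to isolate the parity of each term under $k\mapsto -k$. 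The cleanest route is as follows. Write the hypothesis as $F(a)\in P$ and also use $a\mapsto ak^2$ alongside $a\mapsto ak$. Under $k\mapsto$ odd and even powers, the $[a,a^\ast]$ term and the $a\circ a^\ast$ term both scale by $k^2$, but with signs $-$ and $-$ respectively, while $[\delta(a),\delta(a^\ast)]$ scales by $+k^2$.

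The key comparison is to subtract $k^2F(a)$ from $F(ak)$ modulo the $\delta(k)$-terms. This leaves $2k^2\bigl(\pm[a,a^\ast]\pm a\circ a^\ast\bigr)$ plus the $\delta(k)$-terms. Since $[a,a^\ast]$ and $a\circ a^\ast$ have different symmetry under $a\mapsto a^\ast$, it is better to compare $F(ak)$ with $k^2F(a)$ and then add $k^2F(a^\ast)$. Doing this separates $[\delta(a),\delta(a^\ast)]\pm[a,a^\ast]\in P$ from $a\circ a^\ast\in P$, after cancelling $2k^2$ using primeness, $k\notin P$, and char$(R/P)\neq2$. Once $[\delta(a),\delta(a^\ast)]\pm[a,a^\ast]\in P$ holds for all $a$, the previous corollary gives the conclusion. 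Alternatively, $a\circ a^\ast\in P$ for all $a$ already forces $2a\in P$ (take $a$ with $a^\ast$ replaced via $a+1$-free linearization $a\circ b^\ast+b\circ a^\ast\in P$, then $b=k$ gives $2ka\cdot$ stuff), which is degenerate.

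The main obstacle is controlling the $\delta(k)$ cross-terms $k\delta(k)([\delta(a),a]-[a^\ast,\delta(a^\ast)])$ and $\delta(k)^2[a,a^\ast]$. I would first try to show $\delta(k)\in P$, splitting into the cases $\delta(k)\in P$ (where everything goes through) and $\delta(k)\notin P$. In the latter case, I would compare the substitutions $a\mapsto ak$ and $a\mapsto a(-k)$: the terms odd in $k$ flip sign, so subtracting gives $2k\delta(k)\bigl([\delta(a),a]+[\delta(a^\ast),a^\ast]\bigr)\in P$. Hence $[\delta(a),a]+[\delta(a^\ast),a^\ast]\in P$, and after linearizing and using Lemma \ref{lemmaA}-style arguments this yields $\delta(R)\subseteq P$ or commutativity.
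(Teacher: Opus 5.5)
Your overall plan (strip off the $a\circ a^\ast$ term and invoke the preceding corollary) is the right one. However, the mechanism you propose for removing that term does not work, so there is a genuine gap.

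Your computation of $\delta((ak)^\ast)$ has a sign error. In fact $\delta((ak)^\ast)=\delta(k^\ast a^\ast)\equiv\delta(a^\ast)k+a\,\delta(-k)\equiv k\delta(a^\ast)-\delta(k)a \pmod P$. You can check this with $\delta(x)=[x^\ast,r]$, where $\delta((ak)^\ast)=[ak,r]=k\delta(a^\ast)$. Consequently the leading term of $[\delta(ak),\delta((ak)^\ast)]$ is $-k^2[\delta(a),\delta(a^\ast)]$, not $+k^2[\delta(a),\delta(a^\ast)]$. All three pieces of the identity therefore scale by the same factor $-k^2$, and $F(ak)+k^2F(a)$ reduces to the cross-terms $\delta(k)^2[a,a^\ast]+k\delta(k)\bigl([\delta(a),a]+[a^\ast,\delta(a^\ast)]\bigr)$. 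So comparing $F(ak)$ with $k^2F(a)$ tells you nothing about $[a,a^\ast]$ or $a\circ a^\ast$.

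Your device for killing the cross-terms also fails. Since $\delta(-k)\equiv-\delta(k)\pmod P$, the products $k\delta(k)$ and $\delta(k)^2$ are unchanged under $k\mapsto -k$. Hence $F(a(-k))\equiv F(ak)$, and subtracting gives $0$. The closing appeal to ``Lemma~\ref{lemmaA}-style arguments'' is not an argument.

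The missing idea is simply the substitution $a\mapsto a^\ast$, with no central element at all; this is the paper's proof. You noticed that $[a,a^\ast]$ and $a\circ a^\ast$ behave differently under $a\mapsto a^\ast$, but $[\delta(a),\delta(a^\ast)]$ is also odd, like $[a,a^\ast]$. Write $F(a)=[\delta(a),\delta(a^\ast)]\pm[a,a^\ast]\pm a\circ a^\ast$. Then $F(a^\ast)=-\bigl([\delta(a),\delta(a^\ast)]\pm[a,a^\ast]\bigr)\pm a\circ a^\ast$, so $F(a)-F(a^\ast)=2\bigl([\delta(a),\delta(a^\ast)]\pm[a,a^\ast]\bigr)\in P$. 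Since $\mathrm{char}(R/P)\neq 2$, this gives $[\delta(a),\delta(a^\ast)]\pm[a,a^\ast]\in P$, and the preceding corollary (Theorem~\ref{thm2} with $\delta_1=\delta_2=\delta$) applies.

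Incidentally, $F(a)+F(a^\ast)\in P$ gives $a\circ a^\ast\in P$ for all $a$. At $a=k\in S(R)\cap Z(R)\setminus P$ this reads $-2k^2\in P$. That forces $k^2\in P$ (as $\mathrm{char}(R/P)\neq 2$), and then $k\in P$, because $kRk=k^2R\subseteq P$. This contradicts the $P$-second-kind hypothesis. So the assumed identity can never hold, and the corollary is vacuously true. Your ``degenerate'' remark points in this direction, but the justification you sketch for it is not coherent.
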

\begin{proof}
	We have for all $a\in R$
	 $$[\delta(a),\delta(a^\ast)]\pm [a,a^\ast]\pm a\circ a^\ast\in P$$
	 Interchanging $a$ and $a^\ast$ in the above relation, we find
	 $$[\delta(a),\delta(a^\ast)]\pm [a,a^\ast]\mp a\circ a^\ast\in P$$
	 From last two equations we conclude
	 $$[\delta(a),\delta(a^\ast)]\pm [a,a^\ast]\in P$$
	 Therefore by using the Theorem \ref{thm2} we get the required result.
\end{proof}

\begin{proposition}\label{thm3}
	Let $R$ be a ring with involution $\ast$, $P$ is a prime ideal of $R$ such that char($R/P$)$\neq 2$ and $Z(R/P)\neq \{0\}$. If $\delta_1,\delta_2$ are two $\ast$-reverse $P$-derivations on $R$ such that $\delta_1 (a)\circ \delta_2 (b)\pm a^{\ast}\circ b^{\ast} \in P$  for all $a,b\in R$ then either
		\begin{itemize}
			\item[(i)] $\delta_1 (R)\subseteq P$, $\delta_2 (R)\subseteq P$ or
		\item[(ii)] $R/P$ is commutative integral domain.
	\end{itemize}

\end{proposition}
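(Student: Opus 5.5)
I will follow the proof of Proposition \ref{thm1}, replacing the commutator identity (iii) by the anti-commutator identities (i) and (ii). Put
\begin{equation*}
F(a,b)=\delta_1(a)\circ\delta_2(b)\pm a^\ast\circ b^\ast\in P \quad\text{for all } a,b\in R .
\end{equation*}
Since every $\delta_i$ is additive modulo $P$, $F$ is biadditive modulo $P$.

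\textbf{Step 1: substitute $b\mapsto cb$.} Modulo $P$ we have $\delta_2(cb)\equiv \delta_2(b)c^\ast+b^\ast\delta_2(c)$ and $(cb)^\ast=b^\ast c^\ast$. Expanding with identities (i) and (ii), we get
\begin{equation*}
\delta_1(a)\circ \delta_2(b)c^\ast=(\delta_1(a)\circ\delta_2(b))c^\ast-\delta_2(b)[\delta_1(a),c^\ast],
\end{equation*}
\begin{equation*}
\delta_1(a)\circ b^\ast\delta_2(c)=b^\ast(\delta_1(a)\circ\delta_2(c))+[\delta_1(a),b^\ast]\delta_2(c),
\end{equation*}
\begin{equation*}
a^\ast\circ b^\ast c^\ast=(a^\ast\circ b^\ast)c^\ast-b^\ast[a^\ast,c^\ast].
\end{equation*}
By comparison, in Proposition \ref{thm1} the identity (iii) had no correction term and the $[a^\ast,\cdot]$ pieces cancelled exactly. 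Here they do not all cancel. Subtracting $F(a,b)c^\ast+b^\ast F(a,c)$, I expect to obtain, modulo $P$,
\begin{equation*}
-\delta_2(b)[\delta_1(a),c^\ast]+[\delta_1(a),b^\ast]\delta_2(c)\mp b^\ast[a^\ast,c^\ast]\mp [a^\ast,b^\ast]c^\ast \equiv 0 .
\end{equation*}
The sign bookkeeping must be checked carefully at this point.

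\textbf{Step 2: remove the commutator residue.} The extra terms $b^\ast[a^\ast,c^\ast]+[a^\ast,b^\ast]c^\ast$ are what Proposition \ref{thm1} did not have to handle. My first attempt is the same substitution with $b\mapsto bc$ instead, or $c\mapsto$ an element of $Z(R)$. By Lemma \ref{lemmaB}, for $z\in Z(R)$ the element $\overline{\delta_i(z)}$ is central in $R/P$, and $[\cdot,z^\ast]=0$. This is where the hypothesis $Z(R/P)\neq\{0\}$ should enter.

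Taking $c=z$ central in the relation of Step 1 kills every commutator with $c^\ast$. What remains is $[\delta_1(a),b^\ast]\delta_2(z)\in P$. Replacing $b$ by $br$ gives $[\delta_1(a),R]R\,\delta_2(z)\subseteq P$. So either $\delta_2(Z(R))\subseteq P$, or $[\delta_1(a),R]\subseteq P$ for all $a$. In the latter case Lemma \ref{lemmaA} gives $\delta_1(R)\subseteq P$ or $R/P$ commutative. The symmetric substitution in $a$ handles $\delta_1$.

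\textbf{Step 3: the degenerate central case.} Suppose $\delta_1(Z(R)),\delta_2(Z(R))\subseteq P$. Then I substitute $a\mapsto az$ with $z\in Z(R)$, $z\notin P$ (here I use $Z(R/P)\neq\{0\}$, assuming it lifts to a central element of $R$). Since $\delta_1(az)\equiv z^\ast\delta_1(a)$, this gives
\begin{equation*}
z^\ast(\delta_1(a)\circ\delta_2(b))\pm z^\ast(a^\ast\circ b^\ast)\in P,
\end{equation*}
which is just a multiple of the hypothesis. So this substitution gives no new information, and I would instead specialise $b=a$-type choices together with $c=\delta_1(a)^\ast$, as in (\ref{4d}).

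The aim is to reach $[\delta_1(a),r^\ast]R[\delta_1(a),r^\ast]\subseteq P$, conclude $[\delta_1(a),a^\ast]\in P$, and apply Lemma \ref{lemmaA}. For $\delta_2$ the argument is symmetric.

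\textbf{Main obstacle.} I expect the hard part to be Step 2/3: controlling the anti-commutator residue $\mp(b^\ast[a^\ast,c^\ast]+[a^\ast,b^\ast]c^\ast)$ so that the argument closes to a relation usable in Lemma \ref{lemmaA}. The first thing I would try is to combine the $b\mapsto cb$ and $b\mapsto bc$ expansions and subtract them, in the hope that the $a^\ast\circ$ residues cancel. I would also use $\mathrm{char}(R/P)\neq 2$ to halve the symmetric combinations that appear.
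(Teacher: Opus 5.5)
\textbf{Step 1 is miscomputed, and Step 2 inherits the error.} Subtracting $F(a,b)c^\ast+b^\ast F(a,c)$ removes both $(a^\ast\circ b^\ast)c^\ast$ and $b^\ast(a^\ast\circ c^\ast)$, and
\[
a^\ast\circ b^\ast c^\ast-(a^\ast\circ b^\ast)c^\ast-b^\ast(a^\ast\circ c^\ast)=-2\,b^\ast a^\ast c^\ast .
\]
So the correct relation is $-\delta_2(b)[\delta_1(a),c^\ast]+[\delta_1(a),b^\ast]\delta_2(c)\mp 2b^\ast a^\ast c^\ast\in P$. Your residue $b^\ast[a^\ast,c^\ast]+[a^\ast,b^\ast]c^\ast$ is just $[a^\ast,b^\ast c^\ast]$. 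Even granting it, $c=z$ leaves $\mp[a^\ast,b^\ast]z^\ast$, which is not a commutator with $c^\ast$. Hence ``what remains is $[\delta_1(a),b^\ast]\delta_2(z)\in P$'' is false. What you actually get is $[\delta_1(a),b^\ast]\delta_2(z)\mp 2b^\ast a^\ast z^\ast\in P$.

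The conclusion of Step 2 survives only because this residue has $b^\ast$ as a left factor. Replacing $b$ by $br$ and subtracting $r^\ast$ times the relation leaves exactly $[\delta_1(a),r^\ast]b^\ast\delta_2(z)\in P$. The paper avoids the residue altogether. It substitutes $a\mapsto ab$ with the \emph{same} $b$ in the second slot, so $b^\ast a^\ast\circ b^\ast=b^\ast(a^\ast\circ b^\ast)$ exactly. Then $a\mapsto ra$ and $r=z$ give $[\delta_2(b),b^\ast]R\,\delta_1(z)\subseteq P$.

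\textbf{The genuine gap is Step 3, which you leave open.} The missing idea is one line. If $\delta_2(Z(R))\subseteq P$, put $b=z$ directly into the hypothesis. Then $F(a,z)\equiv\pm(a^\ast\circ z^\ast)=\pm 2z^\ast a^\ast$ modulo $P$, so $2z^\ast R\subseteq P$. Primeness together with $\mathrm{char}(R/P)\neq 2$ forces $Z(R)\subseteq P$. This contradicts $Z(R)\not\subseteq P$, which is the reading of $Z(R/P)\neq\{0\}$ you flagged; the paper tacitly uses the same reading in the same step, with $\delta_1$ and $a=z$. So the degenerate case cannot occur, and you never need both $\delta_i(Z(R))\subseteq P$. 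Step 2 plus Lemma \ref{lemmaA} then finishes. Your substitution $a\mapsto az$ yields nothing precisely because it never uses $\delta_2(z)\in P$.

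\textbf{Your fallback aims at the wrong target.} Run (\ref{4d})--(\ref{4g}) with the corrected residue: take $c=\delta_1(a)^\ast$, then $b\mapsto br$, then $b\mapsto\delta_1(s)^\ast b$, and \emph{add} rather than subtract. This yields $[\delta_1(a),r^\ast]R\,(s^\ast\circ\delta_1(a))\subseteq P$, not $[\delta_1(a),r^\ast]R[\delta_1(a),r^\ast]\subseteq P$. Closing it needs an extra fact: an element anticommuting with all of a prime ring of characteristic $\neq 2$ is zero (one gets $2\bar r\bar s\bar d=0$). Carried out that way, the route would even bypass the hypothesis on $Z(R/P)$. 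None of this is in the proposal as written.
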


\begin{proof}
	We have given that
	\begin{equation}\label{7a}
		\delta_1(a)\circ \delta_2(b)\pm a^{\ast}\circ b^{\ast}\in P \text{~for~all~} a,b \in R
	\end{equation}
	Taking  $ab$ in place of $a$ in the above equation
	\begin{equation}\label{7b}
		\delta_1 (b)a^\ast \circ \delta_2(b)+b^\ast \delta_1(a)\circ \delta_2 (b)\pm b^\ast a^\ast \circ b^\ast \in P
	\end{equation}
	On expanding anti-commutators (\ref{7b}), we get
	
	\begin{equation}\label{7c}
		\begin{split}
			(\delta_1(b)\circ \delta_2(b))a^\ast +\delta_1 (b)[a^\ast, \delta_2 (b)]+b^\ast (\delta_1(a)\circ \delta_2 (b))\\
			-[b^\ast, \delta_2 (b)]\delta_1 (a)\pm b^\ast (a^\ast \circ b^\ast)\in P
		\end{split}	
	\end{equation}
	Using (\ref{7a}) and the previous equation (\ref{7c}), we find
	\begin{equation}\label{7c2}
			(\delta_1(b)\circ \delta_2(b))a^\ast +\delta_1 (b)[a^\ast, \delta_2 (b)]
			-[b^\ast, \delta_2 (b)]\delta_1 (a)\in P
	\end{equation}
	It can be also be written as
	\begin{equation}\label{7d}
		(\delta_1(b)\circ \delta_2(b))a^\ast -\delta_1 (b)[\delta_2 (b),a^\ast]+[\delta_2 (b),b^\ast]\delta_1 (a)\in P
	\end{equation}
	For any $r\in R$, replacing $a$ with $ra$ in (\ref{7d}), we construct
	\begin{equation}\label{7e}
		-\delta_1 (b)a^\ast [\delta_2 (b),r^\ast]+[\delta_2 (b),b^\ast]a^\ast \delta_1 (r) \in P
	\end{equation}
By putting $r=z\in Z(R)$ and replacing $a$ into $a^\ast$ in the equation (\ref{7e}), we obtain for all $z\in Z(R), a,b\in R$
\begin{equation}\label{7na}
	[\delta_2 (b),b^\ast]a \delta_1 (z)\in P \text{~i.e,~} 	[\delta_2 (b),b^\ast]R \delta_1 (z)\subseteq P
\end{equation}
Since $P$ is prime in $R$, so for all $z\in Z(R), b\in R$, we have either
\begin{equation}\label{7nb}
		[\delta_2 (b),b^\ast]\in P\text{~or~} \delta_1 (z)\in P
\end{equation}
   If $\delta_1 (z)\in P$ for all $z\in Z$, then by taking $b=z$ for any $z\in Z(R)$ in the hypothesis we obtain $2za\in P$ for all $a\in R$ i.e, $\overline{2za}=\overline{0}$ in $R/P$. Since $R/P$ is prime ring and char($R/P$)$\neq 2$  which implies that $\bar{z}=\bar{0}$ in $R/P$, that leads to contradiction to our assumption $Z(R/P)\neq \{0\}$.\par
  On the other hand if $[\delta_2 (b),b^\ast]\in P$ for all $b\in R$, then we get the desired outcome by using lemma \ref{lemmaA}.
\end{proof}

\begin{corollary}
	Let $R$ be a ring with involution $\ast$, $P$ is a prime ideal of $R$ such that char($R/P$)$\neq 2$ and $Z(R/P)\neq \{0\}$. Let $\delta$ be a $\ast$-reverse P-derivation. If $\delta (a)\circ \delta (b)\pm a^{\ast}\circ b^{\ast} \in P$  for all $a,b \in R$, then either $\delta (R)\subseteq P$ or $R/P$ is commutative.
\end{corollary}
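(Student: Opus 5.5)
This corollary is the special case $\delta_1=\delta_2=\delta$ of Proposition \ref{thm3}, so the plan is to reduce to it directly. Setting $\delta_1=\delta_2=\delta$, the hypothesis $\delta(a)\circ\delta(b)\pm a^{\ast}\circ b^{\ast}\in P$ for all $a,b\in R$ is exactly the identity assumed in Proposition \ref{thm3}. The standing assumptions also match: $P$ is a prime ideal, $\mathrm{char}(R/P)\neq 2$, $Z(R/P)\neq\{0\}$, and $\delta$ is a $\ast$-reverse $P$-derivation. Applying Proposition \ref{thm3} then yields either $\delta(R)\subseteq P$ or that $R/P$ is a commutative integral domain, which in particular gives commutativity.

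To make the argument self-contained, I would retrace the proof of Proposition \ref{thm3} with $\delta_1=\delta_2=\delta$.

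\textbf{Step 1.} Replace $a$ by $ab$ in the hypothesis and expand using identities (ii). Subtracting the original relation removes the terms $b^\ast(\delta(a)\circ\delta(b))\pm b^\ast(a^\ast\circ b^\ast)$. This leaves
\[
(\delta(b)\circ\delta(b))a^\ast-\delta(b)[\delta(b),a^\ast]+[\delta(b),b^\ast]\delta(a)\in P .
\]

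\textbf{Step 2.} Substitute $ra$ for $a$ and subtract $r$-multiples to get
\[
-\delta(b)a^\ast[\delta(b),r^\ast]+[\delta(b),b^\ast]a^\ast\delta(r)\in P .
\]

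\textbf{Step 3.} Put $r=z\in Z(R)$. The first term then vanishes modulo $P$ by Lemma \ref{lemmaB}, since $\overline{\delta(z)}$ is central and $z^\ast$ is central. What remains is $[\delta(b),b^\ast]R\,\delta(z)\subseteq P$, so primeness gives that for each $b$, either $[\delta(b),b^\ast]\in P$ or $\delta(z)\in P$.

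\textbf{Step 4.} Rule out $\delta(Z(R))\subseteq P$ by putting $a=z$ in the hypothesis. The term $\delta(z)\circ\delta(b)$ lies in $P$, which leaves $z^\ast\circ b^\ast=2z^\ast b^\ast\in P$ for all $b$. Since $\mathrm{char}(R/P)\neq2$ and $R/P$ is prime, this forces $z^\ast\in P$ for all central $z$, so $z\in P$.

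\textbf{Step 5.} Conclude with Lemma \ref{lemmaA}. If $[\delta(b),b^\ast]\in P$ for all $b$, that lemma gives $\delta(R)\subseteq P$ or that $R/P$ is commutative.

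The main obstacle is Step 4. The assumption $Z(R/P)\neq\{0\}$ concerns the center of the quotient, while the argument only controls images of elements of $Z(R)$. One therefore needs some central $z\in Z(R)$ with $z\notin P$, which does not follow from $Z(R/P)\neq\{0\}$ alone. Since the corollary is stated under the same hypotheses as Proposition \ref{thm3}, I would simply inherit that proposition's reading of this assumption rather than try to strengthen it here. A smaller point is the pointwise ``either/or'' in Step 3. To pass from it to a global statement, I would use the fact that a group cannot be the union of two proper subgroups, as in Lemma \ref{lemmaA}.
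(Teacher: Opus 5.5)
Your reduction is exactly the paper's route: the corollary is stated without proof as the case $\delta_1=\delta_2=\delta$ of Proposition~\ref{thm3}. The difficulty you flag in Step~4 is, however, a genuine gap. It sits in the paper's proof of Proposition~\ref{thm3} itself, so ``inheriting the proposition's reading'' does not close it. That proof only shows that $\delta(Z(R))\subseteq P$ forces $Z(R)\subseteq P$, i.e.\ that the image of $Z(R)$ in $R/P$ is zero. This is compatible with $Z(R/P)\neq\{0\}$, because $(Z(R)+P)/P$ can be a proper part of $Z(R/P)$. Nothing in your Steps~1--3 excludes this branch, so as written neither your retrace nor the cited proposition proves the statement. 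Separately, the union-of-two-subgroups device you propose for Step~3 does not apply, since $\{b:[\delta(b),b^\ast]\in P\}$ is not visibly additive. It is also unnecessary: if some $z\in Z(R)$ has $\delta(z)\notin P$, then $[\delta(b),b^\ast]\in P$ for every $b$, and otherwise $\delta(Z(R))\subseteq P$.

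The missing idea is that the hypothesis cannot hold at all when $P\neq R$, so the corollary is true vacuously. Work in $R/P$, which is $2$-torsion free because it is prime of characteristic $\neq 2$. Write the hypothesis as $\overline{\delta(a)}\circ\overline{\delta(c)}=\epsilon\,\overline{a^\ast}\circ\overline{c^\ast}$ with $\epsilon=\mp 1$. Fix $b$ and put $u=\overline{\delta(b)}$, $v=\overline{b^\ast}$.
\begin{itemize}
\item Taking $a=c=b$ gives $2u^2=2\epsilon v^2$, so $u^2=\epsilon v^2$.
\item Taking $a=b^2$, $c=b$, and using $\overline{\delta(b^2)}=uv+vu$, gives $(uv+vu)\circ u=2\epsilon v^3$, i.e.\ $2uvu+u^2v+vu^2=2\epsilon v^3$.
\item Since $u^2v=vu^2=\epsilon v^3$, the previous line reduces to $uvu=0$.
\item Hence $v^3u=\epsilon u^2vu=0$, and then $v^5=\epsilon v^3u^2=0$.
\end{itemize}
The map $b\mapsto\overline{b^\ast}$ is onto $R/P$, so every element of the prime ring $R/P$ has fifth power zero. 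Levitzki's lemma says a nonzero ring that is nil of bounded index has a nonzero nilpotent ideal, which a prime ring cannot have. Therefore $R/P=0$, contradicting $Z(R/P)\neq\{0\}$. With this observation your Steps~1--5 are not needed.
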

\begin{corollary}
		Let $R$ be a prime ring with involution $\ast$, $d_1, d_2$ be two $\ast$-reverse derivations on $R$ and char($R$)$\neq 2$ . If $d_1 (a)\circ d_2 (b)\pm a^{\ast}\circ b^{\ast}=0$ and $Z(R)\neq \{0\}$, then $R$ is commutative.
\end{corollary}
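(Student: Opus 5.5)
This corollary is the special case $P=(0)$ of Proposition \ref{thm3}, so I would obtain it by specializing that proposition rather than redoing the computation.

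\textbf{Checking the hypotheses.} Since $R$ is a prime ring, the zero ideal $P=(0)$ is a prime ideal of $R$. Moreover $R/P\cong R$, so the hypothesis $\mathrm{char}(R)\neq 2$ gives $\mathrm{char}(R/P)\neq 2$. Likewise $Z(R)\neq\{0\}$ translates into $Z(R/P)\neq\{0\}$.

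A $\ast$-reverse derivation $d$ satisfies $d(a+b)-d(a)-d(b)=0\in P$ and $d(ab)-d(b)a^\ast-b^\ast d(a)=0\in P$. Hence it is a $\ast$-reverse $P$-derivation for $P=(0)$. In particular $d_1$ and $d_2$ are $\ast$-reverse $(0)$-derivations. The identity $d_1(a)\circ d_2(b)\pm a^\ast\circ b^\ast=0$ is exactly the membership condition of Proposition \ref{thm3} with $P=(0)$.

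\textbf{Applying the proposition.} Proposition \ref{thm3} now yields one of two alternatives:
\begin{itemize}
\item[(i)] $d_1(R)=d_2(R)=(0)$, or
\item[(ii)] $R\cong R/(0)$ is a commutative integral domain.
\end{itemize}
In case (ii) we are done.

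\textbf{Excluding case (i).} This is the only step that needs an argument. Suppose $d_1=d_2=0$. The identity becomes $a^\ast\circ b^\ast=0$ for all $a,b\in R$. Because $\ast$ is bijective, this means $a\circ b=0$ for all $a,b\in R$.

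Take a nonzero $z\in Z(R)$. Then $z\circ b=2zb=0$ for every $b\in R$, so $2z\,R\,b=0$ for all $b$. Choosing $b\neq 0$, primeness gives $2z=0$. Since $z\neq 0$ and $\mathrm{char}(R)\neq 2$, this is impossible. One way to see it: for any $r\in R$, $(2r)Rz=rR(2z)=0$, so $2r=0$ for all $r$, contradicting $\mathrm{char}(R)\neq 2$.

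The same contradiction appears in the proof of Proposition \ref{thm3} in the case $\delta_1(Z)\subseteq P$. So alternative (i) cannot occur, and $R$ is commutative.
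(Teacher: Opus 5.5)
Your proposal is correct and follows the paper's intended route. The corollary is Proposition~\ref{thm3} specialized to $P=(0)$, and your explicit exclusion of alternative (i), using a nonzero central $z$ and $\mathrm{char}(R)\neq 2$, supplies the step the paper leaves implicit; it is valid at $P=(0)$ because there $Z(R/P)=Z(R)$, whereas the paper's analogous step in the general proof does not automatically have this. Your exclusion uses only that $d_1(a)\circ d_2(b)$ vanishes identically, so it works equally well if just one of $d_1,d_2$ is zero. That is all the proof of Proposition~\ref{thm3} actually delivers, namely $\delta_2(R)\subseteq P$ or commutativity, so your argument is robust to reading (i) with either "and" or "or".
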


\begin{theorem}\label{thm4}

		Let $R$ be a ring with involution $\ast$ of $P$-second kind where $P$ is a prime ideal of $R$ such that char($R/P$)$\neq 2.$ If $\delta_1,\delta_2$ are two $\ast$-reverse $P$-derivations on $R$ such that $\delta_1 (a)\circ \delta_2 (a^{\ast})\pm a\circ a^{\ast} \in P$ for all $a\in R$ then either
			\begin{itemize}
			\item[(i)] $\delta_1 (R)\subseteq P$, $\delta_2 (R)\subseteq P$ or
		\item[(ii)] $R/P$ is commutative integral domain.
		\end{itemize}
		
\end{theorem}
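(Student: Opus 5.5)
The plan is to copy the proof of Theorem \ref{thm2} with commutators replaced by anti-commutators. The aim is to reduce the hypothesis to the two-variable identity of Proposition \ref{thm3}, namely $\delta_1(a)\circ\delta_2(b)\mp a^\ast\circ b^\ast\in P$. The involution being of $P$-second kind will also supply the hypothesis $Z(R/P)\neq\{0\}$ needed there: a $k\in S(R)\cap Z(R)\setminus P$ gives $\bar k\neq\bar 0$ in $Z(R/P)$.

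First polarize the hypothesis by replacing $a$ with $a+b$. This gives
\begin{equation*}
\delta_1(a)\circ\delta_2(b^\ast)+\delta_1(b)\circ\delta_2(a^\ast)\pm\bigl(a\circ b^\ast+b\circ a^\ast\bigr)\in P
\end{equation*}
for all $a,b\in R$, up to additivity errors lying in $P$.

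Next handle the central elements. Take $h\in H(R)\cap Z(R)$ and replace $a$ by $ah$. Since $\delta_1(ah)\equiv \delta_1(h)a^\ast+h\delta_1(a)$ and $\delta_2((ah)^\ast)=\delta_2(ha^\ast)\equiv\delta_2(a^\ast)h+a\delta_2(h)$, Lemma \ref{lemmaB} makes $\overline{\delta_i(h)}$ central in $R/P$. Subtracting $h$ times the polarized identity leaves
\begin{equation*}
\delta_1(h)\bigl(a^\ast\circ\delta_2(b^\ast)\bigr)+\bigl(\delta_1(b)\circ a\bigr)\delta_2(h)\in P.
\end{equation*}
Now replace $a$ by $ak$ with $k\in S(R)\cap Z(R)\setminus P$, and note that $k$ is central with $k^\ast=-k$. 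This gives the same expression with a sign change in the first term, multiplied by $k$. Primeness of $P$ removes $k$. Adding the two expressions and using char$(R/P)\neq2$ yields $(\delta_1(b)\circ a)\delta_2(h)\in P$, and symmetrically $\delta_1(h)(a^\ast\circ\delta_2(b^\ast))\in P$.

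Then upgrade to a primeness statement. Replacing $a$ by $ar$ in $(\delta_1(b)\circ a)\delta_2(h)\in P$ gives
\begin{equation*}
(\delta_1(b)\circ a)r\delta_2(h)-a[\delta_1(b),r]\delta_2(h)\in P.
\end{equation*}
This is the expected obstacle, because the anticommutator does not split as cleanly as the commutator did. I would first use the centrality of $\overline{\delta_2(h)}$ to get $(\delta_1(b)\circ a)\delta_2(h)R\subseteq P$. So either $\delta_2(h)\in P$, or $\delta_1(b)\circ a\in P$ for all $a,b$. In the latter case, taking $a=a c$ and using identity (i) gives $\delta_1(b)\circ ac=(\delta_1(b)\circ a)c-a[\delta_1(b),c]$. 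Hence $R[\delta_1(b),c]\subseteq P$, so $[\delta_1(b),c]\in P$, and Lemma \ref{lemmaA} (with $c=b^\ast$) gives $\delta_1(R)\subseteq P$ or $R/P$ commutative. In the former case, substituting $h=k^2$ gives $\delta_2(k^2)\equiv 2k\delta_2(k)$ modulo $P$ (using $k^\ast=-k$ and centrality), so $\delta_2(k)\in P$. The symmetric argument gives $\delta_1(k)\in P$.

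With $\delta_i(k)\in P$, replacing $a$ by $ak$ in the polarized identity flips the sign of the terms with $a^\ast$. Cancelling $k$ and adding the result to the original polarized identity isolates
\begin{equation*}
\delta_1(b)\circ\delta_2(a^\ast)\pm a\circ b^\ast\in P.
\end{equation*}
Renaming the variables puts this in the form $\delta_1(a)\circ\delta_2(b)\mp a^\ast\circ b^\ast\in P$ for all $a,b$. Proposition \ref{thm3} then applies and yields (i) or (ii).
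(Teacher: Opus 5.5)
Your proposal is correct and follows the paper's proof essentially step for step: polarize, substitute $ah$ and then $ak$ to isolate $(\delta_1(b)\circ a)\delta_2(h)\in P$, split by primeness and close one branch with Lemma \ref{lemmaA}, use $h=k^2$ to get $\delta_i(k)\in P$, and make a final $ak$ substitution to reach Proposition \ref{thm3}. Applying primeness before converting the anticommutator to a commutator is only a reordering, and your explicit check that $k\in S(R)\cap Z(R)\setminus P$ supplies $Z(R/P)\neq\{0\}$ spells out a hypothesis the paper uses silently; the remaining slips are cosmetic and do not affect the argument: $\delta_2(k^2)\equiv -2k\delta_2(k)$ modulo $P$, the $ak$ substitution actually flips $\delta_1(a)\circ\delta_2(b^\ast)$ and $b\circ a^\ast$ rather than ``the terms with $a^\ast$'' (your displayed outcome is nonetheless right), and since $\circ$ is symmetric the final identity carries $\pm$ rather than $\mp$, which is harmless because Proposition \ref{thm3} allows either sign.
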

\begin{proof}
	We have 
	\begin{equation}\label{8a}
		\delta_1 (a)\circ \delta_2 (a^{\ast})\pm a\circ a^{\ast}\in P \text{~for~all~} a\in R
	\end{equation}
Taking $a+b$ instead of $a$ for any $b\in R$ in the equation (\ref{8a}), we get
\begin{equation}\label{8b}
	\delta_1(a)\circ \delta_2(b^\ast)+\delta_1(b)\circ \delta_2(a^\ast)\pm a\circ b^\ast \pm b\circ a^\ast \in P
\end{equation}
Changing $a$ into $ah$ where $h\in H(R)\cap Z(R)$ in the equation (\ref{8b}) and using the Lemma \ref{lemmaB} we obtain

\begin{equation}\label{8b2}
	\begin{split}
			(\delta_1(a)\circ \delta_2(b^\ast))h+\delta_1(h)(a^\ast \circ \delta_2(b^\ast))+(\delta_1(b)\circ \delta_2(a^\ast))h\\
			+(\delta_1(b)\circ a)\delta_2(h)\pm (a\circ b^\ast)h \pm (b\circ a^\ast)h \in P
	\end{split}
\end{equation}
Multiplying $h$ by (\ref{8b}) and using it in (\ref{8b2}), gives us
\begin{equation}\label{8c}
	\delta_1(h)(a^\ast \circ \delta_2(b^\ast))+(\delta_1(b)\circ a)\delta_2(h)\in P
\end{equation}
For any nonzero $k\in S(R)\cap Z(R)$ changing $a$ into $ak$ in the equation (\ref{8c}) and using primeness of $P$, we get
\begin{equation}\label{8d}
	-\delta_1(h)(a^\ast \circ \delta_2(b^\ast))+(\delta_1(b)\circ a)\delta_2(h)\in P
\end{equation}
Adding the equation (\ref{8c}) and the equation (\ref{8d}) and using char($R/P$)$\neq 2$, we find 
\begin{equation}\label{8e}
	(\delta_1(b)\circ a)\delta_2(h)\in P
\end{equation}
Replacing $a$ with $ar$ for any $r\in R$  in (\ref{8e}) and using an anti-commutator property, we get
\begin{equation}\label{8e1}
	((\delta_1(b)\circ a)r-a[\delta_1(b),r])\delta_2(h)\in P
\end{equation}
Using the lemma \ref{lemmaB} and the equation (\ref{8e1}), we find for all $a,b,r \in R$ and $h\in H(R)\cap Z(R)$,
\begin{equation}\label{8e2}
	a[\delta_1(b),r]\delta_2(h)\in P
\end{equation}
Since $P$ is prime ideal, so we have for all $b,r \in R$ and $h\in H(R)\cap Z(R)$,
\begin{equation}\label{8e3}
	[\delta_1(b),r]\delta_2(h)\in P \text{~i.e.,~} [\delta_1(b),r]R\delta_2(h)\subseteq P
\end{equation}
It implies that either $[\delta_1(b),r]\in P$ for all $b,r\in R$ or $\delta_2(h)\in P$ for all $h\in H$. If $[\delta_1(b),r]\in P$ for all $b,r\in R$, then using lemma \ref{lemmaA} we find the desired result. \\
If we have $\delta_2(h)\in P$, taking $h=k^2$ for $k\in S(R)\cap Z(R)$ to obtain $\delta_2(k)\in P$ for all $k\in S(R)\cap Z(R)$. Similarly we can have  $\delta_1(k)\in P$ for all $k\in S(R)\cap Z(R)$.\\
Now replacing $a$ into $ak$ in the equation (\ref{8b}) we have 
\begin{equation}\label{8f}
	k(-\delta_1(a)\circ \delta_2(b^\ast)+\delta_1(b)\circ \delta_2(a^\ast))\pm k(a\circ b^\ast -b\circ a^\ast)\in P
\end{equation}
Since $P$ is prime and the involution $\ast$ is of $P$-second kind, it provides
\begin{equation}\label{8g}
	-\delta_1(a)\circ \delta_2(b^\ast)+\delta_1(b)\circ \delta_2(a^\ast)\pm (a\circ b^\ast -b\circ a^\ast)\in P
\end{equation}
Comparing (\ref{8b}) and (\ref{8g}), to find
\begin{equation}\label{8h}
\delta_1(b)\circ \delta_2(a^\ast)\pm a\circ b^\ast \in P
\end{equation}
By changing $a$ with $a^\ast$ and interchanging the variables, the equation (\ref{8h}) can be expressed as 
\begin{equation}\label{8i}
	\delta_1(a)\circ \delta_2(b)\pm a^\ast \circ b^\ast \in P \text{~for~all~} a,b\in R
\end{equation}
Hence by using the Proposition \ref{thm3}, ring $R/P$ is commutative.
\end{proof}

\begin{corollary}
Let $R$ be a ring with involution $\ast$ of $P$-second kind where $P$ is a prime ideal of $R$ such that char($R/P$)$\neq 2.$ If $\delta$ is a $\ast$-reverse $P$-derivation on $R$ such that $\delta (a)\circ \delta (a)\pm a\circ a^{\ast} \in P$ for all $a\in R$ then either $\delta (R)\subseteq P$ or $R/P$ is commutative.	
\end{corollary}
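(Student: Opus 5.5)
The plan is to obtain the statement as the special case $\delta_1=\delta_2=\delta$ of Theorem \ref{thm4}. The hypotheses on $R$, $\ast$ and $P$ are identical: $\ast$ is of $P$-second kind, $P$ is prime, and $\mathrm{char}(R/P)\neq 2$. Theorem \ref{thm4} needs the identity $\delta_1(a)\circ\delta_2(a^\ast)\pm a\circ a^\ast\in P$ for all $a\in R$. With $\delta_1=\delta_2=\delta$ this reads $\delta(a)\circ\delta(a^\ast)\pm a\circ a^\ast\in P$. I read the displayed hypothesis $\delta(a)\circ\delta(a)$ as this expression; this parallels the earlier corollary to Theorem \ref{thm2}, where the commutator condition carries $\delta(a^\ast)$ in the second slot. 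Under this reading, alternative (i) of Theorem \ref{thm4} gives $\delta(R)\subseteq P$, and alternative (ii) gives that $R/P$ is a commutative integral domain. So the corollary follows at once.

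If the hypothesis must instead be taken literally, as $2\delta(a)^2\pm a\circ a^\ast\in P$, I would rerun the proof of Theorem \ref{thm4} with this identity.
\begin{enumerate}
\item Linearize by replacing $a$ with $a+b$. This gives $2(\delta(a)\circ\delta(b))\pm(a\circ b^\ast+b\circ a^\ast)\in P$.
\item Replace $b$ by $bk$ with $k\in S(R)\cap Z(R)\setminus P$. Since $\delta(bk)\equiv\delta(k)b^\ast-k\delta(b)$ modulo $P$ and $\overline{\delta(k)}$ is central by Lemma \ref{lemmaB}, this produces a second relation.
\item Compare the two relations and cancel $k$, using primeness and $S(R)\cap Z(R)\nsubseteq P$. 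Use $\mathrm{char}(R/P)\neq 2$ to divide by $2$.
\end{enumerate}

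As in Theorem \ref{thm4}, the first goal is to show that the terms involving $\delta(k)$ force either $\delta(k)\in P$ for all $k\in S(R)\cap Z(R)$ or $[\delta(b),r]\in P$ for all $b,r$. The second alternative finishes via Lemma \ref{lemmaA}. In the first, the remaining relation should reduce to $\delta(a)\circ\delta(b)\pm a^\ast\circ b^\ast\in P$. Proposition \ref{thm3} then applies, once its side condition $Z(R/P)\neq\{0\}$ is secured; this holds because the image of $k$ is a nonzero central element of $R/P$.

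The main obstacle in this literal route is the cancellation step. The symmetric term $a\circ b^\ast+b\circ a^\ast$ must separate into a single term after the $k$-substitution, so that one reaches the form required by Proposition \ref{thm3}. I would handle it the way the proof of Theorem \ref{thm4} handles (\ref{8b})--(\ref{8h}): add and subtract the two relations and divide by $2$.
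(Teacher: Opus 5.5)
Your main argument is the paper's own route: the corollary is stated without separate proof as the case $\delta_1=\delta_2=\delta$ of Theorem~\ref{thm4}, with $\delta(a)\circ\delta(a)$ a misprint for $\delta(a)\circ\delta(a^\ast)$ (the following corollary has the same slip), so alternatives (i) and (ii) of that theorem give the two conclusions at once. The literal fallback is unnecessary: read literally, the identity at $a=k$ gives $\delta(k)^2\equiv\pm k^2$, and at $a=k^3$ (using $\delta(k^3)\equiv 3k^2\delta(k)$ modulo $P$) it then gives $16k^6\in P$, hence $k\in P$, so that hypothesis is never satisfied and the literal version holds vacuously.
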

\begin{corollary}
	Let $R$ be a prime ring with involution of second kind, $d_1 ,d_2$ are non-zero $\ast$-reverse derivations on $R$ and char($R$)$\neq 2$. If $d_1 (a)\circ d_2 (a)\pm a\circ a^{\ast}=0$ for every $a\in R$, then $R$ is commutative.
\end{corollary}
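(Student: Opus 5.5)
The plan is to run the argument of Theorem \ref{thm4} with $P=(0)$, where a $\ast$-reverse derivation is a $\ast$-reverse $P$-derivation and $S(R)\cap Z(R)\neq\{0\}$ because the involution is of the second kind. The statement has $d_2(a)$ in place of $d_2(a^\ast)$, so Theorem \ref{thm4} does not apply directly. Instead I will use the central elements to reduce the identity to a bilinear one that the earlier machinery (Lemma \ref{lemmaA}, Lemma \ref{lemmaB}) can handle.

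First, linearize the hypothesis by replacing $a$ with $a+b$. Since $d_i$ is additive, this gives, for all $a,b\in R$,
\[
L(a,b):=d_1(a)\circ d_2(b)+d_1(b)\circ d_2(a)\pm\bigl(a\circ b^\ast+b\circ a^\ast\bigr)=0 .
\]
Fix $0\neq k\in S(R)\cap Z(R)$ and put $c=d_1(k)$, $e=d_2(k)$; by Lemma \ref{lemmaB} (with $P=(0)$) these lie in $Z(R)$. From $d_i(ak)=d_i(k)a^\ast+k^\ast d_i(a)=d_i(k)a^\ast-k\,d_i(a)$ we get
\[
L(ak,b)=c\,(a^\ast\circ d_2(b))-k\,(d_1(a)\circ d_2(b))+e\,(d_1(b)\circ a^\ast)-k\,(d_1(b)\circ d_2(a))\pm k\bigl(a\circ b^\ast-b\circ a^\ast\bigr).
\]
Adding $k\,L(a,b)=0$ and replacing $a$ by $a^\ast$ then yields
\[
c\,(a\circ d_2(b))+e\,(d_1(b)\circ a)\pm 2k\,(a^\ast\circ b^\ast)=0\qquad(\ast)
\]
for all $a,b\in R$.

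The argument now splits into cases according to $c$ and $e$. If $c=e=0$, then $(\ast)$ gives $2k(a^\ast\circ b^\ast)=0$. Since $k$ is a nonzero central element of a prime ring, it is not a zero divisor, and $\operatorname{char}R\neq 2$, so $a\circ b=0$ for all $a,b$. Then the identity $a\circ bc=(a\circ b)c-b[a,c]$ gives $bR[a,c]=0$, so $R$ is commutative. (In fact $2a^2=0$ then forces $R=0$, so this case is degenerate.)

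Otherwise, put $b=k$ in $(\ast)$. Using $a\circ z=2za$ for central $z$, this gives $4ce\,a\mp 4k^2a=0$, hence $ce=\pm k^2\neq 0$. So both $c$ and $e$ are nonzero, and $(\ast)$ holds with nonzero central coefficients.

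Next, replace $a$ by $ar$ in $(\ast)$ and subtract $(\ast)$ multiplied on the right by $r$. Using $(ar\circ x)=(a\circ x)r+a[r,x]$ and, on the $\ast$-side, $(ar)^\ast=r^\ast a^\ast$ together with $(ab\circ c)=a(b\circ c)-[a,c]b$, the expression collapses to
\[
c\,a[r,d_2(b)]+e\,a[d_1(b),r]\pm 2k\,(\text{terms in }[r^\ast,b^\ast],\,[r^\ast,a^\ast])=0 .
\]
Specializing $a$ (first to central elements, then to $a=r$ or $a=b$) should isolate a relation of the form $z\,[d_1(b),b^\ast]=0$ or $z\,[d_2(b),b^\ast]=0$ with $0\neq z\in Z(R)$. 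Primeness then gives $[d_i(b),b^\ast]=0$, and Lemma \ref{lemmaA} with $P=(0)$ yields $d_i=0$ or $R$ commutative. Since $d_1,d_2$ are assumed nonzero, $R$ is commutative.

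I expect the main obstacle to be this last extraction step. The $\ast$-terms produced by $ar$ do not cancel as cleanly as in Proposition \ref{thm3}, so it is not yet clear that the commutator term can be isolated. If the specialization does not close, I would first try substituting $b\mapsto bk$ in $(\ast)$ and combining it with $(\ast)$. That substitution flips the sign of the $2k$ term relative to the $c,e$ terms, so adding and subtracting should separate $c(a\circ d_2(b))+e(d_1(b)\circ a)=0$ from $a^\ast\circ b^\ast=0$, exactly as was done in (\ref{8f})--(\ref{8h}). The latter relation is impossible unless $R$ is commutative, which would give a contradiction or the conclusion directly.
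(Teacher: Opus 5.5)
Your proposal is not yet a proof. The decisive step is never computed: substituting $a\mapsto ar$ in $(\ast)$ and ``specializing'' until $z[d_i(b),b^\ast]=0$ appears. You say yourself that it may not close, so nothing actually reaches Lemma~\ref{lemmaA}.

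There is also an error just before that step. With $b=k$ in $(\ast)$, the last term is $\pm2k(a^\ast\circ k^\ast)=\mp4k^2a^\ast$, not $\mp4k^2a$. What you really get is
\[
ce\,a=\pm k^2a^\ast\qquad\text{for all } a\in R .
\]
This corrected identity finishes the proof by itself, with no case split and no Lemma~\ref{lemmaA}:
\begin{itemize}
\item taking $a=k^2\in H(R)\cap Z(R)$ gives $ce=\pm k^2$;
\item taking $a=k$ gives $ce=\mp k^2$;
\item hence $2k^2=0$, so $k^2=0$ because char$(R)\neq 2$. This is impossible for a nonzero central element of a prime ring.
\end{itemize}
So in a prime ring with involution of the second kind and char$(R)\neq2$, the identity $d_1(a)\circ d_2(a)\pm a\circ a^\ast=0$ can never hold, and the corollary is true only vacuously. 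You are right that it cannot be read off from Theorem~\ref{thm4} with $P=(0)$, because that theorem concerns $d_1(a)\circ d_2(a^\ast)$. Specializing Theorem~\ref{thm4} is evidently what the paper intends, since it gives no separate proof.

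Your fallback can also be made to work, but not for the reason you give. Under $b\mapsto bk$, the $c$-, $e$- and $2k$-terms of $(\ast)$ all pick up the same factor $-k$, so there is no relative sign flip. The only new contribution comes from the $d_i(k)b^\ast$ parts of $d_i(bk)$. Adding $k$ times $(\ast)$ leaves exactly $2ce\,(a\circ b^\ast)=0$. The hypothesis at $a=k$ gives $2ce\mp2k^2=0$, i.e.\ $ce=\pm k^2\neq0$. Hence $a\circ b=0$ for all $a,b\in R$, and your argument from the case $c=e=0$ then yields commutativity. In fact it yields $R=0$, which again shows the hypothesis is contradictory.
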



\begin{thebibliography}{100}
	
	
	\bibitem{Mamouni2018}
	{\sc A.~Mamouni, B.~Nejjar, L.~Oukhtite},
	{\sl Differential identities on prime rings with involution}.
	J. Algebr. Appl. {\bf 17}(9) (2018), 1850163(1-11).
		
	
	\bibitem{bhushan2020}
	{\sc B. Bhushan, G.S. Sandhu, D. Kumar}, {\sl A note on $\ast$-reverse derivation in rings}, Advances in Mathematics: Scientific Journal, \textbf{9 (7)} (2020), 1-6.
	
		\bibitem{Bhushan2021}
	{\sc B.~Bhushan, G.~S.~Sandhu, S.~Ali, D.~Kumar},
	{\sl A classification of generalized derivations in rings with involution.}
	Filomat {\bf 35}(5) (2021), 1439-1452.
	
	
	
		\bibitem{Deepak24}
	{\sc D. Kumar, S. Singh, and B. Bhushan},
	{\sl On *-reverse derivations of involutive rings.}
	 Proceedings of The ICRTMPCS International Conference 2023. Walter de Gruyter GmbH and Co KG. (2024), (p389).
	
		\bibitem{posner1957}
	{\sc  E.C. Posner}
	{\sl Derivations in prime rings}
	Proceedings of the American Mathematical Society 8, 6 (1957),
1093--1100.
	
	\bibitem{Sandhu23}
	{\sc G.S. Sandhu,  A. Boua, N. Ur Rehman},
	{\sl Some results involving P-derivations and prime ideals in rings.}
	ANNALI DELL'UNIVERSITA'DI FERRARA, 69(2). (2023), 587-604.
	
	
	\bibitem{ElMir2020}
	{\sc H. El Mir, A. Mamouni, L. Oukhtite},
	{\sl Commutativity with algebraic identities involving prime ideals},
Commun. Korean Math. Soc., \textbf{35}(3). (2020), 723-731.
	
	
	\bibitem{herstein78}
	{\sc I.N. Herstein}
	\newblock A note on derivation.
	\newblock {\em Canadian Matematics Bulltein 21}, 3 (1978), 369--370.
	
		\bibitem{Herstein}
	{\sc I.N. Herstein}, {\sl Jordan derivations of prime rings}, Proceedings of the American Mathematical Society, \textbf{8} (1957), 1104--1110.
	
	
		\bibitem{M.A.Idrissi2022}
	{\sc 	M. A. Idrissi, L. Oukhtite},
	{\sl Structure of a quotient ring R/P with generalized derivations acting on a prime ideal P and some applications},
	Indian J. Pure Appl. Math. {\bf 53}(3) (2022), 792-800.
	
	\bibitem{M.Ashraf2002}
	{\sc M. Ashraf, N. Rehman}, {\sl On commutativity of prime rings with derivations}, Result. Math., \textbf{42} (2002), 3-8.
	
	

	
	\bibitem{bresar1989}
	{\sc M. Bre\v{s}ar, J. Vukman},
	{\sl On some additive mappings in rings with involution},
	Aequationes Mathematicae, {\bf 38} (1989), 178-185.
	
 \bibitem{Khan2021}
M. S. Khan, S. Ali and M. Ayedh,
{\it Herstein's theorem for prime ideals in rings with involution involving pair of derivations.}
Commun. Algebra {\bf 50}(6) (2022), 2592-2603.
	
	\bibitem{Samman2006} {\sc M. Samman}, {\sl A note on reverse derivations}, International Journal of Mathematical Education in Science and Technology, \textbf{37} (2006), 98-101.
	
	
	
	\bibitem{Rehman24}
	{\sc N. U. Rehman, H. M. Alnoghashi, and M. Hongan},
	{\sl On generalized derivations involving prime ideals with involution.}
	Ukrainian Mathematical Journal. {\bf 75}(08) (2024), 1219-1241.
	
	
	

	
	
	
	
\end{thebibliography}
\end{document}